\newtheorem{definition}{Definition}[section]
\newtheorem{thm}{Theorem}[section]
\newtheorem{prop}{Proposition}[section]
\newtheorem{lemma}{Lemma}[section]
\newtheorem{cor}{Corollary}[section]
\newtheorem{rmk}{Remark}[section]
\newtheorem*{theorem}{Theorem}
\numberwithin{equation}{section}
\title{On the adjoint representation of $\mathfrak{sl}_n$ and \\ the Fibonacci numbers}
\date{\today}
\author{ \textsc{Pamela E. Harris}}
\begin{document}
\maketitle
\begin{center}
University of Wisconsin - Milwaukee \\ Department of Mathematical Sciences \\ P.O. Box 0413, Milwaukee WI 53201\\
peharris@uwm.edu

\end{center}

\begin{abstract}
We decompose the adjoint representation of $\mathfrak{sl}_{r+1}=\mathfrak {sl}_{r+1}(\mathbb C)$ by a purely combinatorial approach based on the introduction of a certain subset of the Weyl group called the \emph{Weyl alternation set} associated to a pair of dominant integral weights.  The cardinality of the Weyl alternation set associated to the highest root and zero weight of $\mathfrak {sl}_{r+1}$ is given by the $r^{th}$ Fibonacci number. We then obtain the exponents of $\mathfrak {sl}_{r+1}$ from this point of view.
\end{abstract}

\section{Introduction}
\bigskip
Let $G$ be a simple linear algebraic group over $\mathbb C$, and $T$ a maximal algebraic torus in $G$ of dimension $r$. Let $B$, $T\subseteq B \subseteq G$, be a choice of Borel subgroup. Then let  $\mathfrak g$, $\mathfrak h$, and $\mathfrak b$ denote the Lie algebras of $G$, $T$, and $B$ respectively. Let $\Phi$ be the set of roots corresponding to $(\mathfrak {g,h})$, and let $\Phi^+\subseteq\Phi$ be the choice of positive roots with respect to $\mathfrak b$. Let $P(\mathfrak g)$ be the integral weights with respect to $\mathfrak h$, and let $P_+(\mathfrak g)$ be the dominant integral weights. The theorem of the highest weight asserts that any finite dimensional complex irreducible representation of $\mathfrak g$ is equivalent to a highest weight representation with dominant integral highest weight $\lambda$, denoted by $L(\lambda)$. A good general reference for highest weight theory in the finite dimensional setting, and for our terminology and notation is~\cite{GW}.

Let $W=Norm_G(T)/T$ denote the Weyl group corresponding to $G$ and $T$. For $w\in W$, we let $\ell(w)$ denote the length of $w$. Set $\epsilon(w)=(-1)^{\ell(w)}$. Kostant's partition function is the non-negative integer valued function, $\wp$, defined on $\mathfrak h^*$ by $\wp(\xi)$ = number of ways $\xi$ may be written as a non-negative integral sum of positive roots, for $\xi\in\mathfrak h^*$.

An area of interest in combinatorial representation theory is finding the multiplicity of a weight $\mu$ in $L(\lambda)$. One way to compute this multiplicity, denoted $m(\lambda,\mu)$, is by Kostant's weight multiplicity formula~\cite{KMF}:
\begin{align}
m(\lambda,\mu)=\displaystyle\sum_{\sigma\in W}^{}\epsilon(\sigma)\wp(\sigma(\lambda+\rho)-(\mu+\rho))\label{mult formula},
\end{align} where $\rho=\frac{1}{2}\sum_{\alpha\in\Phi^+}\alpha$. One complication in using (\ref{mult formula}) to compute multiplicities is that closed formulas for the value of Kostant's partition function are not known in much generality. A second complication concerns the exponential growth of the Weyl group order as $r\rightarrow\infty$. In practice, most terms in (\ref{mult formula}) are zero and hence do not contribute to the overall multiplicity. With the aim of describing the contributing terms in Kostant's weight multiplicity formula, we give the following definition.
\begin{definition} For $\lambda,\mu$ dominant integral weights of $\mathfrak g$ define the Weyl alternation set to be \begin{center}$\mathcal A(\lambda,\mu)=\{\sigma\in W|\;\wp(\sigma(\lambda+\rho)-(\mu+\rho))>0\}$.\end{center}
\end{definition}

Let $\{\varpi_1,\ldots,\varpi_r\}$ be the set of fundamental weights of $\mathfrak g$. Let $(,):\mathfrak h^*\times\mathfrak h^*\rightarrow\mathbb C$ be the symmetric non-degenerate form corresponding to the trace form. Then the following proposition will be useful in determining Weyl alternation sets. Its proof is an easy exercise.

\begin{prop}\label{prop1} Let $\xi\in\mathfrak h^*$. Then $\wp(\xi)>0$ if and only if\footnote{$\mathbb N=\{0,1,2,\ldots\}$.} $(\varpi_i,\xi)\in\mathbb N$, for all $1\le i\le r$.
\end{prop}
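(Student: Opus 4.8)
The plan is to reduce the statement to the single observation that the fundamental weights and the simple roots form dual bases of $\mathfrak h^*$ with respect to the trace form. Recall first that $\wp(\xi)>0$ precisely when $\xi$ admits at least one expression as a non-negative integral sum of positive roots; since each positive root is itself a non-negative integral combination of the simple roots $\alpha_1,\dots,\alpha_r$, and conversely each simple root is a positive root, the condition $\wp(\xi)>0$ is equivalent to $\xi$ lying in the monoid $\{\sum_{j=1}^r c_j\alpha_j : c_j\in\mathbb N\}$ generated by the simple roots. Thus it suffices to show that, for $\xi$ written in the (complex) basis of simple roots as $\xi=\sum_j c_j\alpha_j$, the coefficients $c_j$ are recovered by $c_j=(\varpi_j,\xi)$, so that integrality and non-negativity of all the $c_j$ is exactly integrality and non-negativity of all the $(\varpi_j,\xi)$.

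The key computation is therefore $(\varpi_i,\alpha_j)=\delta_{ij}$. First I would recall the defining property of the fundamental weights, $\langle \varpi_i,\alpha_j^\vee\rangle = 2(\varpi_i,\alpha_j)/(\alpha_j,\alpha_j)=\delta_{ij}$, and then use that for $\mathfrak{sl}_{r+1}$ the trace form may be normalized so that $(\alpha,\alpha)=2$ for every root, whence $\alpha_j^\vee=\alpha_j$ and $(\varpi_i,\alpha_j)=\delta_{ij}$. Consequently $\{\varpi_i\}$ and $\{\alpha_j\}$ are dual bases, and for $\xi=\sum_j c_j\alpha_j$ one obtains $(\varpi_i,\xi)=\sum_j c_j(\varpi_i,\alpha_j)=c_i$.

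With this in hand both implications are immediate. For the forward direction, if $\wp(\xi)>0$ then $\xi=\sum_j c_j\alpha_j$ with every $c_j\in\mathbb N$, so $(\varpi_i,\xi)=c_i\in\mathbb N$ for all $i$. For the converse, expand an arbitrary $\xi\in\mathfrak h^*$ in the simple root basis as $\xi=\sum_j c_j\alpha_j$ with $c_j\in\mathbb C$; the hypothesis $(\varpi_i,\xi)\in\mathbb N$ combined with $(\varpi_i,\xi)=c_i$ forces $c_i\in\mathbb N$ for each $i$, so $\xi$ is a non-negative integral combination of positive roots and hence $\wp(\xi)\ge 1>0$ (the empty sum handling the case $\xi=0$). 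I do not expect a genuine obstacle here: the only point requiring care is the normalization of the trace form and the resulting identity $(\varpi_i,\alpha_j)=\delta_{ij}$, which is special to the simply-laced type $A$ setting and is exactly what makes pairing with $\varpi_i$ read off simple-root coefficients. Once that is fixed the proposition is a routine change-of-basis argument, consistent with the author's description of it as an easy exercise.
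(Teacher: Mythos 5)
Your argument is correct and is precisely the intended ``easy exercise'': the paper gives no proof of this proposition, and the standard route is the dual-basis computation $(\varpi_i,\alpha_j)=\delta_{ij}$ that you carry out, which turns both implications into reading off the simple-root coordinates $c_j$ of $\xi$ via $c_j=(\varpi_j,\xi)$, together with the observation that the monoid generated by the positive roots coincides with the monoid generated by the simple roots. The only point worth keeping in view is the one you already flag: the identity $(\varpi_i,\alpha_j)=\delta_{ij}$ depends on the normalization $(\alpha_j,\alpha_j)=2$, which holds for the trace form on $\mathfrak{sl}_{r+1}$ (the only case the paper uses), whereas the proposition is stated in the general setting of the introduction, where for non-simply-laced types one would instead get $(\varpi_i,\alpha_j)=\delta_{ij}(\alpha_j,\alpha_j)/2$ and the statement would require a corresponding adjustment.
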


The purpose of this short note is to demonstrate that the sets $\mathcal A(\lambda,\mu)$ are combinatorially interesting. To this end, we specialize to the case when $\mathfrak g=\mathfrak {sl}_{r+1}$ and in Section 2 prove the following:

\begin{theorem} If $r\ge 1$ and $\tilde\alpha$ is the highest root of $\mathfrak {sl}_{r+1}$, then $|\mathcal A(\tilde\alpha,0)|=F_r$, where $F_r$ denotes the $r^{th}$ Fibonacci number.
\end{theorem}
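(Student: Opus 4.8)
The plan is to use Proposition \ref{prop1} to convert membership in $\mathcal A(\tilde\alpha,0)$ into an explicit inequality on permutations, and then to exhibit a Fibonacci recursion. I would realize $\mathfrak{sl}_{r+1}$ in the standard way, with $\mathfrak h^*$ spanned by $e_1,\dots,e_{r+1}$ modulo $e_1+\cdots+e_{r+1}$, simple roots $\alpha_i=e_i-e_{i+1}$, highest root $\tilde\alpha=e_1-e_{r+1}$, and $W=S_{r+1}$ permuting coordinates. Taking $\rho=(r,r-1,\dots,1,0)$, one computes $\tilde\alpha+\rho=(r+1,r-1,r-2,\dots,1,-1)$, whose $r+1$ coordinates are the \emph{distinct} integers $r+1,\,r-1,\,r-2,\,\dots,\,1,\,-1$; hence the elements $\sigma\in W$ correspond bijectively to the rearrangements $a=(a_1,\dots,a_{r+1})$ of this list. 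A short computation shows that for $\xi\in\mathfrak h^*$ of coordinate sum zero one has $(\varpi_i,\xi)=\sum_{j=1}^i\xi_j$, so applying Proposition \ref{prop1} to $\xi=\sigma(\tilde\alpha+\rho)-\rho$ (recall $\mu=0$) gives: $\sigma\in\mathcal A(\tilde\alpha,0)$ if and only if every partial sum of $\sigma(\tilde\alpha+\rho)$ dominates the corresponding partial sum of $\rho$, integrality being automatic.

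Next I would pin down the two extreme coordinates. The inequality at $i=1$ forces $a_1\ge r$, hence $a_1=r+1$; the inequality at $i=r$ reads $\sum_{j=1}^{r}a_j\ge\sum_{j=1}^{r}\rho_j=\sum_{j=1}^{r+1}a_j$, forcing $a_{r+1}\le 0$, hence $a_{r+1}=-1$. So every admissible $\sigma$ places the largest entry first and the negative entry last, and the inner coordinates $(a_2,\dots,a_r)$ form a rearrangement $b=(b_1,\dots,b_n)$ of $\{1,2,\dots,n\}$ with $n=r-1$. Substituting $\sum_{j=1}^{m+1}\rho_j$ and simplifying, the surviving inequalities reduce, for each $1\le m\le n$, to $b_1+\cdots+b_m\ge S_m^{\max}-1$, where $S_m^{\max}$ is the sum of the $m$ largest elements of $\{1,\dots,n\}$. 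In words, each prefix must be either the top $m$ elements or the top $m$ with its smallest member dropped by one step.

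The heart of the argument is then a recursion for $f(n)$, the number of such rearrangements of $\{1,\dots,n\}$. Examining $b_1$, the $m=1$ inequality gives $b_1\in\{n-1,n\}$. If $b_1=n$, then deleting it leaves a valid rearrangement of $\{1,\dots,n-1\}$, and conversely, so this case contributes $f(n-1)$. If $b_1=n-1$, the $m=2$ inequality becomes $(n-1)+b_2\ge 2n-2$, which forces $b_2=n$; stripping these two entries leaves $(b_3,\dots,b_n)$ ranging exactly over the valid rearrangements of $\{1,\dots,n-2\}$, contributing $f(n-2)$. Hence $f(n)=f(n-1)+f(n-2)$ with $f(0)=f(1)=1$, so $f(n)=F_{n+1}$ and $|\mathcal A(\tilde\alpha,0)|=f(r-1)=F_r$.

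I expect the main obstacle to be the bookkeeping in the middle step: checking carefully that the forced values at the two ends are correct and that, after stripping them off, the whole family of partial-sum inequalities telescopes exactly into the clean ``within $1$ of the maximal prefix'' condition on $\{1,\dots,n\}$, so that the recursion splits into precisely the two cases above with no boundary exceptions. The small base cases $r=1,2,3,4$ (giving $1,1,2,3$) provide a useful check that the indexing of $\rho$ and the Fibonacci initial conditions are aligned.
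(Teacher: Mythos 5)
Your proof is correct and follows essentially the same route as the paper: reduce membership in $\mathcal A(\tilde\alpha,0)$ via Proposition \ref{prop1} to partial-sum inequalities on the rearranged coordinates of $\tilde\alpha+\rho$, force the first and last entries, and count the middle block by the Fibonacci recursion. The only difference is organizational — the paper states the intermediate characterization as $\sigma(1)=1$, $\sigma(n)=n$, $|\sigma(i)-i|\le 1$ (Theorem \ref{t1}) and leaves the Fibonacci count of such permutations to the reader (Lemma \ref{p2}), whereas you keep everything in terms of prefix sums and prove the recursion $f(n)=f(n-1)+f(n-2)$ explicitly; the two characterizations are equivalent.
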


This result gives rise to a (new) combinatorial identity associated to a Cartan subalgebra of $\mathfrak {sl}_{r+1}$, which we present in Section 3. The non-zero weights, $\mu$, of $\mathfrak {sl}_{r+1}$ are considered in Section 4 from the same point of view. We now introduce notation and terminology to make our approach precise.

\begin{rmk}The above theorem does not generalize to other simple Lie algebras, which motivates further research.
\end{rmk}

\section{The zero weight space}

Let $r\ge 1$, and let $n=r+1$. Let $G=SL_n(\mathbb C)$, $\mathfrak g=\mathfrak{sl}_n(\mathbb C)$, and let\footnote{$diag[a_1,\ldots,a_n]$ is the diagonal $n\times n$ matrix whose entries are $a_1,\ldots,a_n$.} \begin{center}$\mathfrak h=\{diag[a_1,\ldots,a_n]|a_1,\ldots,a_n\in\mathbb C,\displaystyle\sum_{i=1}^{n}a_i=0\}$\end{center} be a fixed choice of Cartan subalgebra. Let $\mathfrak b$ denote the set of $n\times n$ upper triangular complex matrices with trace zero. For $1\le i\le n$, define the linear functionals $\varepsilon_i:\mathfrak h\rightarrow \mathbb C$ by $\varepsilon_i(H)=a_i$, for any $H=diag[a_1,\ldots,a_n]\in\mathfrak h$. The Weyl group, $W$, is isomorphic to $S_n$, the symmetric group on $n$ letters, and acts on $\mathfrak h^*$ by permutations of $\varepsilon_1,\ldots,\varepsilon_n$.

For each $1\le i\le r$, let $\alpha_i=\varepsilon_i-\varepsilon_{i+1}$. Then the set of simple and positive roots corresponding to $(\mathfrak g,\mathfrak b)$ are $\Delta=\{\alpha_1,\ldots,\alpha_{r}\}$, and $\Phi^+=\{\varepsilon_i-\varepsilon_j|\;1\le i<j\le n\}$ respectively. The highest root is $\tilde\alpha=\varepsilon_1-\varepsilon_n=\alpha_1+\cdots+\alpha_{r}$. The fundamental weights are defined by $\varpi_i=\varepsilon_1+\cdots+\varepsilon_i-\frac{i}{n}(\varepsilon_1+\cdots+\varepsilon_n)$, where $1\le i\le r$. Then the sets of integral weights, and dominant integral weights are
\begin{align*}P(\mathfrak g)&=\{a_1\varpi_1+\cdots+a_{r}\varpi_{r}|a_i\in\mathbb Z\text{, for all } i=1,\ldots,r\},\text{ and}\\
P_+(\mathfrak g)&=\{a_1\varpi_1+\cdots+a_{r}\varpi_{r}|a_i\in\mathbb N\text{, for all } i=1,\ldots,r\} \text{ respectively.}
\end{align*}

Let $(,)$ be the symmetric bilinear form on $\mathfrak h^*$ corresponding to the trace form as in ~\cite{GW}. Observe that if $H=diag[a_1,\ldots,a_n]\in\mathfrak h$, then $(\varepsilon_1+\cdots+\varepsilon_n)(H)=a_1+\cdots+a_n=0$. Thus for any $1\le i\le r$, we can write $\varpi_i=\varepsilon_1+\cdots+\varepsilon_i$.

As a simplification we will write $\xi\in\mathfrak h^*$ as an $n$-tuple whose $i$-th coordinate is given by $(\varepsilon_i,\xi)$. Hence if $\xi=(\xi_1,\ldots,\xi_n)\in\mathfrak h^*$ and $1\le i\le r$, then $(\varpi_i,\xi)=\xi_1+\cdots+\xi_i$. Also notice $\tilde\alpha=(1,\;0,\;0,\ldots,\;0,\;-1)$ is the highest root, $\rho=(n-1,\;n-2\;,n-3,\ldots,\;2,\;1,\;0)$, and $\tilde\alpha+\rho=(n,\;n-2,\;n-3,\ldots,\;2,\;1,\;-1)$.
It will be useful to relabel $\tilde\alpha+\rho=(a_1,a_2,\ldots,\;a_n)$, where
\begin{align}
a_j=\begin{cases}n & \text{if $j=1$}\\
                -1 & \text{if $j=n$}\\
                n-j&\text{otherwise.}\end{cases}\label{relabel}
\end{align}
Then for any $\sigma\in W$,
\begin{align}\sigma(\tilde\alpha+\rho)-\rho &= (a_{\sigma^{-1}(1)}-n+1,\;a_{\sigma^{-1}(2)}-n+2,\ldots,\;a_{\sigma^{-1}(n-1)}-1,\;a_{\sigma^{-1} (n)})\label{1.3}.
\end{align}

\begin{thm}\label{t1}
Let $\sigma\in S_n$. Then $\sigma\in\mathcal A(\tilde\alpha,0)$ if and only if $\sigma(1)=1$, $\sigma(n)=n$, and $|\sigma(i)-i|\le 1$, for all $1\le i\le n$.
\end{thm}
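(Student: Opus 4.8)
The plan is to translate membership in $\mathcal A(\tilde\alpha,0)$ into a system of inequalities on the partial sums of the coordinates of $\sigma(\tilde\alpha+\rho)-\rho$, and then to recognize the solutions as exactly the permutations described. By Proposition~\ref{prop1}, $\sigma\in\mathcal A(\tilde\alpha,0)$ if and only if $(\varpi_i,\sigma(\tilde\alpha+\rho)-\rho)\in\mathbb N$ for all $1\le i\le r$. Using (\ref{1.3}) together with $(\varpi_i,\xi)=\xi_1+\cdots+\xi_i$, this says
\[
S_i:=\sum_{j=1}^{i}\bigl(a_{\sigma^{-1}(j)}-(n-j)\bigr)\ge 0,\qquad 1\le i\le r.
\]
Since every $a_k$ and every $n-j$ is an integer, the integrality demanded by Proposition~\ref{prop1} is automatic, so only the nonnegativity of each $S_i$ must be analyzed.

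First I would extract the two boundary conditions. The inequality $S_1\ge 0$ reads $a_{\sigma^{-1}(1)}\ge n-1$; since $a_1=n$ is the only entry of $\tilde\alpha+\rho$ that is at least $n-1$, this forces $\sigma^{-1}(1)=1$, i.e.\ $\sigma(1)=1$. Because the coordinates of $\sigma(\tilde\alpha+\rho)-\rho$ sum to $0$, the last one being $a_{\sigma^{-1}(n)}$, we have $S_{r}=S_{n-1}=-a_{\sigma^{-1}(n)}$, so $S_{n-1}\ge 0$ reads $a_{\sigma^{-1}(n)}\le 0$; as $a_n=-1$ is the only nonpositive entry, this forces $\sigma(n)=n$.

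With $\sigma(1)=1$ and $\sigma(n)=n$, the permutation $\sigma$ restricts to a permutation of $\{2,\dots,n-1\}$, on which $a_j=n-j$ agrees with the corresponding entry of $\rho$. Hence for $2\le j\le n-1$ the $j$-th coordinate simplifies to $j-\sigma^{-1}(j)$, and writing $D_i:=\sum_{j=2}^{i}\bigl(\sigma^{-1}(j)-j\bigr)$ we have $S_i=1-D_i$, so the remaining conditions become $D_i\le 1$ for $2\le i\le n-1$. The crux is to show these bounds hold for all $i$ precisely when $|\sigma(i)-i|\le 1$ for every $i$. Since $\sigma^{-1}(2),\dots,\sigma^{-1}(i)$ are distinct elements of $\{2,\dots,n-1\}$, their sum is minimized by $\{2,\dots,i\}$, giving $D_i\ge 0$ automatically; thus $D_i\le 1$ is equivalent to $D_i\in\{0,1\}$. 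A short lemma shows that the only $(i-1)$-element subsets of $\{2,3,\dots\}$ whose sum equals the minimum, or the minimum plus one, are $\{2,\dots,i\}$ and $\{2,\dots,i-1,i+1\}$, so $D_i\in\{0,1\}$ for all $i$ forces each initial image set $\{\sigma^{-1}(2),\dots,\sigma^{-1}(i)\}$ to be one of these two sets.

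Finally I would run an induction on $i$ using this dichotomy: if $\{\sigma^{-1}(2),\dots,\sigma^{-1}(i)\}=\{2,\dots,i\}$ then $\sigma^{-1}(i+1)\in\{i+1,i+2\}$, whereas if it equals $\{2,\dots,i-1,i+1\}$ then $\sigma^{-1}(i+1)=i$ is forced; this is exactly the rule that builds $\sigma^{-1}$ (and hence $\sigma$) as a product of disjoint adjacent transpositions, which is equivalent to $|\sigma(i)-i|\le 1$ for all $i$. The converse is a direct check: when $\sigma$ is such a product fixing $1$ and $n$, each $D_i$ equals $0$ or $1$, whence $S_i=1-D_i\ge 0$. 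I expect the main obstacle to be precisely this last combinatorial characterization---turning the single scalar bound $D_i\le 1$ into the rigid adjacent-transposition structure---and the key to overcoming it is the observation that the automatic lower bound $D_i\ge 0$ collapses the inequality to the two-valued condition $D_i\in\{0,1\}$, whose extremal subsets are unique.
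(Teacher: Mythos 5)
Your proposal is correct and follows essentially the same route as the paper: reduce membership to nonnegativity of the partial sums via Proposition~\ref{prop1}, pin down $\sigma(1)=1$ and $\sigma(n)=n$ from the first and last sums, and then run an induction in which each partial sum takes only the values $0$ and $1$ (your two extremal sets $\{2,\dots,i\}$ and $\{2,\dots,i-1,i+1\}$ are exactly the two inductive cases in the paper's proof and in its Proposition~\ref{p1}). The one genuine streamlining is your a priori bound $D_i\ge 0$ coming from the distinctness of $\sigma^{-1}(2),\dots,\sigma^{-1}(i)$, which the paper instead derives as part of the induction.
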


We begin with the following technical propositions. The proof of Proposition \ref{product} is an easy exercise.
\begin{prop}\label{product} Let $\sigma\in S_n$. Then $\sigma$ is a product of commuting neighboring transpositions if and only if $|\sigma(i)-i|\le 1$, for all $1\le i\le n$.
\end{prop}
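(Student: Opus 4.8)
The plan is to prove the two implications separately, first recording the elementary fact that two neighboring transpositions $(i,i+1)$ and $(j,j+1)$ commute precisely when their supports are disjoint, i.e. when $\{i,i+1\}\cap\{j,j+1\}=\emptyset$. Thus ``a product of commuting neighboring transpositions'' is the same thing as a product of pairwise disjoint adjacent transpositions, and I would use this reformulation throughout.

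For the forward implication, suppose $\sigma=\tau_1\cdots\tau_m$ with each $\tau_k=(i_k,i_k+1)$ and with pairwise disjoint supports. Every index $i$ then lies in the support of at most one $\tau_k$: if it lies in none, $\sigma(i)=i$; if it lies in the support of some $\tau_k$, then $\sigma(i)=i\pm 1$. In either case $|\sigma(i)-i|\le 1$, as required.

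For the reverse implication I would induct on $n$. Assume $|\sigma(i)-i|\le 1$ for all $i$, and let $i$ be the least index with $\sigma(i)\ne i$, so that $\sigma$ fixes $1,\ldots,i-1$. Since $\sigma$ is a bijection already taking the value $i-1$ at $i-1$, we cannot have $\sigma(i)=i-1$, and the displacement bound then forces $\sigma(i)=i+1$. The crucial point is to show $\sigma(i+1)=i$ as well. By the bound, $\sigma(i+1)\in\{i,i+1,i+2\}$; it cannot equal $i+1$, which is already the image of $i$; and if $\sigma(i+1)=i+2$ then no index could map to $i$, since the only candidates permitted by the displacement bound are $i-1,i,i+1$, all of which are already used as images. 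Hence $\sigma(i+1)=i$, so $\sigma$ agrees with $(i,i+1)$ on $\{i,i+1\}$ and, by bijectivity, permutes $\{i+2,\ldots,n\}$ among itself. Setting $\sigma'=(i,i+1)\sigma$, we obtain a permutation fixing $1,\ldots,i+1$ whose restriction to $\{i+2,\ldots,n\}$ still satisfies the displacement bound; by induction this restriction is a product of commuting neighboring transpositions supported in $\{i+2,\ldots,n\}$, which are automatically disjoint from $(i,i+1)$. Since $(i,i+1)$ is an involution, $\sigma=(i,i+1)\sigma'$ exhibits $\sigma$ in the desired form.

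The main obstacle is precisely the step ruling out $\sigma(i+1)=i+2$: here one must combine the purely local constraint $|\sigma(i)-i|\le 1$ with the global bijectivity of $\sigma$ to prevent the permutation from cascading into a longer cycle, thereby guaranteeing that the leading nontrivial block is a clean adjacent transposition rather than the start of a larger one. Once this is secured, the rest is routine induction and bookkeeping, and in particular the disjointness of the resulting transpositions (hence their commutativity) comes for free from the way successive blocks are peeled off from left to right.
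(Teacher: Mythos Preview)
Your argument is correct: the forward direction is immediate once one notes that pairwise commuting neighboring transpositions have disjoint supports (after cancelling any repeated factors), and your inductive peeling-off argument for the converse is sound, including the key step ruling out $\sigma(i+1)=i+2$ by observing that no index could then map to $i$. The paper itself does not supply a proof of this proposition, declaring it ``an easy exercise,'' so there is nothing to compare against; your write-up is a clean way to fill in that exercise.
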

%\begin{proof}

%$(\Rightarrow)$ Assume $\sigma\in S_n$ is a product of commuting neighboring transpositions. Hence $\sigma=(i_1\;\;i_1+1)(i_2\;\;i_2+1)\cdots(i_k\;\;i_k+1)$, for some non-consecutive integers $i_1,i_2,\ldots,i_k$ between $1$ and $n-1$. If we let $A=\{i\;|\;\sigma(i)\neq i\}$, then \begin{center}$|\sigma(j)-j|=\begin{cases}1&\text{if $j\in A$}\\ 0&\text{if $j\notin A$.}\end{cases}$\end{center} Thus $|\sigma(i)-i|\le 1$ for all $i=1,\ldots,n$.

%$(\Leftarrow)$ We can always write $\sigma$ as a product of transpositions. Since $\sigma\in S_n$ with $|\sigma(i)-i|\le 1$ for all $ i=1,\ldots, n$, we must have that  $\sigma=(i_1\;\;i_1+1)(i_2\;\;i_2+1)\cdots(i_k\;\;i_k+1)$ for some non-consecutive integers $i_1,i_2,\ldots,i_k$ between $1$ and $n-1$. Thus $\sigma$ is a product of commuting neighboring transpositions.
%\end{proof}
\begin{prop}\label{p1}
Let $\sigma\in S_n$ such that $|\sigma(i)-i|\le 1$, for all $1\le i\le n$. Then $\sigma(1)=1$, and $\sigma(n)=n$ if and only if for any $ 1\le i\le r$,
\begin{align*}(\varpi_i,\sigma(\tilde\alpha+\rho)-\rho)=\begin{cases}
                1 & \text{if $\{\sigma(1),\sigma(2)\ldots,\sigma(i)\}=\{1,2,\ldots,i\}$}\\
                                            0 & \text{if $\sigma(i)=i+1$.}
                                            \end{cases}\end{align*}
\end{prop}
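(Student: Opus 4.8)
The plan is to lean on Proposition \ref{product}: under the standing hypothesis $|\sigma(i)-i|\le 1$ it presents $\sigma$ as a product of commuting neighboring transpositions, so in particular $\sigma$ is an involution and $\sigma^{-1}=\sigma$. Starting from the coordinate description (\ref{1.3}), the $k$-th entry of $\sigma(\tilde\alpha+\rho)-\rho$ is $a_{\sigma^{-1}(k)}-(n-k)$, and since $(\varpi_i,\xi)=\xi_1+\cdots+\xi_i$ one gets
\begin{align*}
(\varpi_i,\sigma(\tilde\alpha+\rho)-\rho)=\sum_{j\in\sigma^{-1}(\{1,\ldots,i\})}a_j-\sum_{k=1}^{i}(n-k).
\end{align*}
The single observation that makes everything transparent is that, writing $\rho=(\rho_1,\ldots,\rho_n)$ with $\rho_j=n-j$, the relabeling (\ref{relabel}) gives $a_j-\rho_j=\delta_{j,1}-\delta_{j,n}$; that is, $\tilde\alpha+\rho$ differs from $\rho$ only by $+1$ in the first slot and $-1$ in the last. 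Using $\sigma^{-1}=\sigma$ to rewrite $\sigma^{-1}(\{1,\ldots,i\})$ as $T:=\{\sigma(1),\ldots,\sigma(i)\}$, the quantity collapses to
\begin{align*}
(\varpi_i,\sigma(\tilde\alpha+\rho)-\rho)=\Big(\sum_{j\in T}\rho_j-\sum_{k=1}^{i}\rho_k\Big)+[\,1\in T\,]-[\,n\in T\,].
\end{align*}

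Next I would run the case analysis dictated by whether the transposition $(i,i+1)$ appears in $\sigma$. If it does not, then no transposition straddles the gap between $i$ and $i+1$, so $\sigma$ preserves $\{1,\ldots,i\}$ and $T=\{1,\ldots,i\}$; here the $\rho$-sums cancel, $1\in T$, and $n\notin T$ (since $i\le r$), giving the value $1$. If $(i,i+1)$ does appear, then $\sigma(i)=i+1$ and $T=\{1,\ldots,i-1,i+1\}$; the difference of $\rho$-sums is $\rho_{i+1}-\rho_i=-1$, while $[\,1\in T\,]=[\,i\ge 2\,]$ and $[\,n\in T\,]=[\,i=r\,]$, so the value is $-1+[\,i\ge 2\,]-[\,i=r\,]$. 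Thus in this second case the value equals $0$ exactly on the interior $2\le i\le r-1$, and equals $-1$ (or $-2$ in the degenerate case $n=2$) at the endpoints $i=1$ and $i=r$. This also confirms that for each fixed $i$ the two alternatives in the statement are exhaustive and mutually exclusive, that the Case~1 value is always $1$, and that the Case~2 value is $0$ precisely off the boundary.

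Finally I would assemble the equivalence. Because exactly one of the two conditions holds for every $1\le i\le r$, the asserted formula holds for all $i$ if and only if the transposition $(i,i+1)$ never occurs at the two endpoints $i=1$ and $i=r$, these being the only indices at which Case~2 produces a value other than $0$. But $(1,2)$ occurring is precisely $\sigma(1)=2$, i.e.\ $\sigma(1)\ne 1$, and $(r,r+1)=(n-1,n)$ occurring is precisely $\sigma(n)=n-1$, i.e.\ $\sigma(n)\ne n$; ruling both out is exactly the condition $\sigma(1)=1$ and $\sigma(n)=n$. I expect the main obstacle to be organizational rather than computational: one must keep straight that the index set appearing in the sum is $\sigma^{-1}(\{1,\ldots,i\})$ (which the involution property reconciles with the image set $\{\sigma(1),\ldots,\sigma(i)\}$ named in the statement), and one must notice that the $\pm1$ corrections carried by $a_1$ and $a_n$ are felt only when $1$ or $n$ enters $T$, i.e.\ only at the two boundary indices—so that these boundary terms are exactly what the hypotheses $\sigma(1)=1$ and $\sigma(n)=n$ serve to control.
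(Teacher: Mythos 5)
Your argument is correct, and it takes a genuinely different route from the paper. The paper proves the forward implication by induction on $i$, splitting at each step according to whether $(\varpi_{i-1},\sigma(\tilde\alpha+\rho)-\rho)$ equals $1$ or $0$, and handles the reverse implication separately by checking $i=1$ and $i=r$. You instead compute $(\varpi_i,\sigma(\tilde\alpha+\rho)-\rho)$ in closed form for every $\sigma$ satisfying the standing hypothesis: the observation that $a_j-\rho_j$ vanishes except for $+1$ at $j=1$ and $-1$ at $j=n$, combined with Proposition~\ref{product} (which forces $T=\{\sigma(1),\dots,\sigma(i)\}$ to be either $\{1,\dots,i\}$ or $\{1,\dots,i-1,i+1\}$ according to whether the transposition $(i\;\;i+1)$ occurs in $\sigma$), yields the exact value $1$ in the first case and $-1+[i\ge 2]-[i=r]$ in the second. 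Both directions of the equivalence then drop out simultaneously by inspecting where the second expression fails to be $0$, namely at $i=1$ and $i=r$, which correspond precisely to $\sigma(1)=2$ and $\sigma(n)=n-1$. What your approach buys is the elimination of the induction and a transparent explanation of \emph{why} the endpoints are the only obstruction (the $\pm 1$ corrections from $a_1$ and $a_n$ are felt only when $1$ or $n$ enters $T$); it also makes explicit that the two alternatives in the statement are exhaustive and mutually exclusive, which the paper leaves implicit. The one place to be slightly more careful in a written version is the claim that $\{\sigma(1),\dots,\sigma(i-1)\}=\{1,\dots,i-1\}$ when $(i\;\;i+1)$ occurs: this follows because the remaining transpositions in the commuting product are disjoint from $\{i,i+1\}$ and hence act within $\{1,\dots,i-1\}$ or within $\{i+2,\dots,n\}$ — you indicate this, but it deserves a sentence of justification.
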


\begin{proof}
$(\Rightarrow)$ Assume $\sigma \in S_n$ such that $\sigma(1)=1$, $\sigma(n)=n$, and $| \sigma(i)-i|\le 1$, for all \linebreak$1\le i\le n$. By Proposition \ref{product}, $\sigma$ is a product of commuting neighboring transpositions, and hence $\sigma=\sigma^{-1}$. Then $\sigma(\tilde\alpha+\rho)-\rho=(1,2-\sigma(2),\ldots,(n-1)-\sigma(n-1),-1)$.

We proceed by induction on $i$. If $i=1$, then $\sigma(1)=1$ and $(\varpi_1,\sigma(\tilde\alpha+\rho)-\rho)=1$. Now let $1< i\le r$, and assume that for any $j\le i-1$, \begin{center}$(\varpi_j,\sigma(\tilde\alpha+\rho)-\rho)=\begin{cases} 1 & \text{if $\{\sigma(1),\sigma(2)\ldots,\sigma(j)\}=\{1,2,\ldots,j\}$}\\
                                            0 & \text{if $\sigma(j)=j+1$.}
                                            \end{cases}$\end{center}
Suppose that $j=i$. By induction hypothesis, since $j-1\le i-1$, we have that
\begin{center}$(\varpi_{j-1},\sigma(\tilde\alpha+\rho)-\rho)=\begin{cases} 1 &\text{if $\{\sigma(1),\;\sigma(2),\ldots,\sigma(j-1)\}=\{1,\;2,\ldots,j-1\}$}\\
0 &\text{if $\sigma(j-1)=j$.}
\end{cases}$\end{center}

Case 1: Assume $(\varpi_{j-1},\sigma(\tilde\alpha+\rho)-\rho)=1$. So $\{\sigma(1),\ldots,\sigma(j-1)\}=\{1,\ldots,j-1\}$, and $\sigma(j)=j$ or $\sigma(j)=j+1$. Hence,
\begin{align*}(\varpi_j,\sigma(\tilde\alpha+\rho)-\rho)&=1+(j-\sigma(j))= \begin{cases}
1 & \text{if $\{\sigma(1),\ldots,\sigma(j)\}=\{1,\ldots,j\}$}\\
0 & \text{if $\sigma(j)=j+1$.}\end{cases}
\end{align*}

Case $2$: Assume $(\varpi_{j-1},\sigma(\tilde\alpha+\rho)-\rho)=0$. So $\sigma(j-1)=j$, and observe that  \begin{align*}(\varpi_{j-1},\sigma(\tilde\alpha+\rho)-\rho)&=(\varpi_{j-2},\sigma(\tilde\alpha+\rho)-\rho)+((j-1)-\sigma(j-1))\\
&=(\varpi_{j-2},\sigma(\tilde\alpha+\rho)-\rho)-1=0.
\end{align*} Hence $(\varpi_{j-2},\sigma(\tilde\alpha+\rho)-\rho)=1$, and by induction hypothesis we have that \\$\{\sigma(1),\sigma(2),\ldots,\sigma(j-2)\}=\{1,2,\ldots,j-2\}$. So $\sigma(j)=j+1$ or $\sigma(j)=j-1$. If $\sigma(j)=j+1$, then $\sigma(k)=j-1$, for some integer $k\ge j+1$. This implies that $|\sigma(k)-k|\ge 2,$ a contradiction. Thus $\sigma(j)=j-1$, $\{\sigma(1),\ldots,\sigma(j)\}=\{1,\ldots,j\}$, and $(\varpi_j,\sigma(\tilde\alpha+\rho)-\rho)=j-\sigma(j)= 1$.

$(\Leftarrow)$ Let $\sigma \in S_n$ such that $| \sigma(i)-i|\le 1$, for all $ 1 \le i \le n$. Suppose that\\ $(\varpi_j,\sigma(\tilde\alpha+\rho)-\rho)=\begin{cases}
                1 & \text{if $\{\sigma(1),\sigma(2)\ldots,\sigma(j)\}=\{1,2,\ldots,j\}$}\\
                                            0 & \text{if $\sigma(j)=j+1$,}
                                            \end{cases}$ holds for any $1\le j \le r$.
Proposition \ref{product} implies that $\sigma=\sigma^{-1}$, hence (\ref{1.3}) simplifies to \begin{center}$\sigma(\tilde\alpha+\rho)-\rho=(a_{\sigma(1)}-n+1,\;a_{\sigma(2)}-n+2,\ldots,\;a_{\sigma(n-1)}-1,\;a_{\sigma (n)}),$\end{center} where $a_i$ is defined by (\ref{relabel}).
If $\sigma(1)\ne 1$, then $\sigma(1)=2$, and $(\varpi_1,\sigma(\tilde\alpha+\rho)-\rho)=-1$, a contradiction. Thus $\sigma(1)=1$. If $\sigma(n)\ne n$, then $\sigma(n)=n-1$, and $(\varpi_{r},\sigma(\tilde\alpha+\rho)-\rho)=-a_{\sigma(n)}=-1$, another contradiction. Thus $\sigma(n)=n$.
\end{proof}

\begin{proof} [Proof of Theorem \ref{t1}]

Recall that $\sigma\in\mathcal A(\tilde\alpha,0)$ if and only if $\wp (\sigma(\tilde\alpha +\rho)-\rho)>0$. Hence it suffices to show that $\wp (\sigma(\tilde\alpha +\rho)-\rho)>0$ if and only if $\sigma(1)=1$, $\sigma(n)=n$, and $|\sigma(i)-i|\le 1$, for all $1\le i\le n$.

$(\Rightarrow)$ Let $\sigma\in S_n$ such that $\wp(\sigma(\tilde\alpha+\rho)-\rho)>0$. So, by Proposition \ref{prop1}, $(\varpi_i,\sigma(\tilde\alpha+\rho)-\rho)\in\mathbb N$, for all $1\le i\le r$. By (\ref{1.3}) we have that \begin{center}$\sigma(\tilde\alpha+\rho)-\rho=(a_{\sigma^{-1}(1)}-n+1,\;a_{\sigma^{-1}(2)}-n+2,\ldots,\;a_{\sigma^{-1}(n-1)}-1,\;a_{\sigma^{-1} (n)})$,\end{center} where $a_i$ is defined by (\ref{relabel}). We want to prove $\sigma(1)=1$, $\sigma(n)=n$, and $\lvert i-\sigma (i)\lvert\le 1$, for all $1\le i\le n$.

If $1<\sigma^{-1}(1)\le n$, then $(\varpi_1,\sigma(\tilde\alpha+\rho)-\rho)=a_{\sigma^{-1}(1)}-n+1<0$, a contradiction. So $\sigma^{-1}(1)=1$, and hence $\sigma(1)=1$. If $1< \sigma^{-1}(n)<n$, then $(\varpi_{r},\sigma(\tilde\alpha+\rho)-\rho)
=-a_{\sigma^{-1}(n)}=\sigma^{-1}(n)-n<0$, a contradiction.
%\begin{align*}
%(\varpi_{r},\sigma(\tilde\alpha+\rho)-\rho)
%&=\sum_{i=1}^{n-1}[a_{\sigma^{-1}(i)}-n+i]=\sum_{i=1}^{n}a_{\sigma^{-1}(i)}-n(n-1)+\frac{(n-1)n}{2}-a_{\sigma^{-1}(n)}\\
%&=n-1+\frac{(n-2)(n-1)}{2}-n(n-1)+\frac{(n-1)n}{2}-a_{\sigma^{-1}(n)}\\
%&=-a_{\sigma^{-1}(n)}=\sigma^{-1}(n)-n<0\text{, a contradiction.}
%\end{align*}
So $\sigma^{-1}(n)=n$, and hence $\sigma(n)=n$.

Hence (\ref{1.3}) simplifies to \begin{center}$\sigma(\tilde\alpha+\rho)-\rho=(1,\;2-\sigma^{-1}(2),\;3-\sigma^{-1}(3),\ldots,\;(n-1)-\sigma^{-1}(n-1),\;-1)$.\end{center}

Observe that if $|i-\sigma^{-1}(i)|\le 1$, for all $1<i<n$, then Proposition \ref{product} implies $\sigma^{-1}=\sigma$ and thus $|i-\sigma(i)|\le 1$, for all $1<i<n$. Thus it suffices to show that $|i-\sigma^{-1}(i)|\le 1$, for all $1<i<n$.

We proceed by induction on $i$. If $i=2$, then $(\varpi_2,\sigma(\tilde\alpha+\rho)-\rho)=1+2-\sigma^{-1}(2)\ge 0$ if and only if $\sigma^{-1}(2)\le 3$. Since $\sigma^{-1}(1)=1$, we have that $\sigma^{-1}(2)=2$ or $\sigma^{-1}(2)=3$ and in either case  $|2-\sigma^{-1}(2)|\le 1$.

Now let $2\le i\le n-1$ and assume that $|j-\sigma^{-1}(j)|\le 1$ holds for any $j<i$.

Suppose that $j=i$. Since $j-1<i$, we have that $|(j-1)-\sigma^{-1}(j-1)|\le 1$. Thus $\sigma^{-1}(j-1)=j-2,\;j-1$ or $j$. If $\sigma^{-1}(j-1)=j-2$ or $j-1$, then $\{\sigma^{-1}(1),\ldots,\sigma^{-1}(j-1)\}=\{1,\ldots,j-1\}$ and $(\varpi_{j-1},\sigma(\tilde\alpha+\rho)-\rho)=1$. Hence $(\varpi_j,\sigma(\tilde\alpha+\rho)-\rho)=1+j-\sigma^{-1}(j)\ge 0$ if and only if $\sigma^{-1}(j)\le j+1$. Thus $\sigma^{-1}(j)=j$ or $j+1$, and in either case $|j-\sigma^{-1}(j)|\le 1$.

Now suppose that $\sigma^{-1}(j-1)=j$. Since $j-2<i$, we have that $|(j-2)-\sigma^{-1}(j-2)|\le 1$. Hence $\{\sigma^{-1}(1),\ldots,\sigma^{-1}(j-2)\}=\{1,\ldots, j-2\}$ and $(\varpi_{j-1},\sigma(\tilde\alpha+\rho)-\rho)=0$. Then $(\varpi_{j},\sigma(\tilde\alpha+\rho)-\rho)=j-\sigma^{-1}(j)\ge 0$ if and only if $\sigma^{-1}(j)\le j$. Thus $\sigma^{-1}(j)=j-1$ and $|j-\sigma^{-1}(j)|\le 1$, which completes our induction step.

$(\Leftarrow)$ Let $\sigma\in S_n$ such that $\sigma(1)=1$, $\sigma(n)=n$, and $|\sigma(i)-i|\le 1$, for all $1\le i \le n$. Proposition \ref{p1} implies that $(\varpi_i,\sigma(\tilde\alpha+\rho)-\rho)= 0$ or $1$, for all $1\le i\le r$. Therefore, by Proposition \ref{prop1}, $\wp (\sigma(\tilde\alpha+\rho)-\rho)>0$.
\end{proof}

\begin{definition} The Fibonacci numbers are the sequence of numbers, $\{F_n\}_{n=1}^{\infty}$, defined by the recurrence relation \begin{align*}F_n=F_{n-1}+F_{n-2} \text{ for } n\ge 3, \text{ and } F_1=F_2=1.\end{align*}
\end{definition}
\begin{rmk} The Fibonacci numbers are well known for their prevalence throughout mathematics. We refer the reader to \cite{Fibonacci}.
\end{rmk}
We leave the proof of the following lemma to the reader.
\begin{lemma}\label{p2}
If $m\ge 1$, then $|\{\sigma\in S_m:|\sigma(i)-i|\le 1\text{, for all }1\le i\le m\}|=F_{m+1}$.
\end{lemma}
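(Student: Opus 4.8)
The plan is to show that the counting function
$a_m := |\{\sigma\in S_m : |\sigma(i)-i|\le 1\text{ for all }1\le i\le m\}|$
satisfies the Fibonacci recurrence with matching initial conditions, and then conclude by uniqueness of solutions of a second-order linear recurrence. First I would dispose of the base cases by direct inspection: $a_1=1$, since the identity is the only permutation of a single letter, and $a_2=2$, the two permutations being the identity and the transposition $(1\,2)$. Since $F_2=1$ and $F_3=2$, we have $a_1=F_2$ and $a_2=F_3$, which sets up the shift by one in the index.

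The heart of the argument is the recurrence $a_m=a_{m-1}+a_{m-2}$ for $m\ge 3$, which I would obtain by conditioning on the value of $\sigma(m)$. Because $|\sigma(m)-m|\le 1$, there are exactly two possibilities: $\sigma(m)=m$ or $\sigma(m)=m-1$. If $\sigma(m)=m$, then $\sigma$ restricts to a permutation of $\{1,\ldots,m-1\}$ retaining the displacement bound $|\sigma(i)-i|\le 1$, and conversely every such permutation extends uniquely to one on $\{1,\ldots,m\}$ fixing $m$; this accounts for $a_{m-1}$ of the permutations. If instead $\sigma(m)=m-1$, I would argue that $\sigma$ is forced to interchange $m-1$ and $m$: the letter $m$ is the image $\sigma(j)$ of some $j$, and the constraint $|\sigma(j)-j|\le 1$ forces $j\in\{m-1,m\}$; since $\sigma(m)=m-1\ne m$, we must have $\sigma(m-1)=m$. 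Hence $\sigma$ fixes the pair $\{m-1,m\}$ via the transposition $(m-1\ \, m)$ and restricts to such a permutation of $\{1,\ldots,m-2\}$, accounting for the remaining $a_{m-2}$ permutations. As the two cases are mutually exclusive and exhaustive, $a_m=a_{m-1}+a_{m-2}$.

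With the recurrence and the two base values in hand, a routine induction on $m$ finishes the proof: $a_m$ and $F_{m+1}$ obey the same recurrence $x_m=x_{m-1}+x_{m-2}$ and agree at $m=1$ and $m=2$, so they agree for all $m\ge 1$, giving $a_m=F_{m+1}$.

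The only mildly delicate point is the forcing argument in the second case, namely that $\sigma(m)=m-1$ compels $\sigma(m-1)=m$; this is where the bijectivity of $\sigma$ together with the displacement bound is essential. An alternative route that sidesteps this step entirely is to invoke Proposition \ref{product}: the permutations being counted are precisely the products of commuting neighboring transpositions, which are in bijection with the matchings (including the empty matching) of the path graph on $m$ vertices, where each chosen transposition $(i\ \, i{+}1)$ corresponds to the edge $\{i,i{+}1\}$. Counting matchings of a path by conditioning on whether the last vertex is matched yields the identical recurrence, so either formulation leads to $F_{m+1}$.
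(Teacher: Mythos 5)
Your proof is correct. The paper explicitly leaves this lemma to the reader, so there is no proof to compare against; your argument --- conditioning on whether $\sigma(m)=m$ or $\sigma(m)=m-1$ (the latter forcing $\sigma(m-1)=m$ by bijectivity), establishing $a_m=a_{m-1}+a_{m-2}$, and matching base cases $a_1=1=F_2$, $a_2=2=F_3$ --- is exactly the standard argument the author intends, and your alternative via Proposition \ref{product} and matchings of a path is a clean equivalent.
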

Now for the main result of this section.
\begin{thm}\label{fib} If $r\ge 1$ and $\tilde\alpha$ is the highest root of $\mathfrak {sl}_{r+1}$, then $|\mathcal A(\tilde\alpha,0)|=F_r$.
\end{thm}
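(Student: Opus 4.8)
The plan is to combine Theorem~\ref{t1}, which provides an explicit combinatorial description of $\mathcal A(\tilde\alpha,0)$, with the counting Lemma~\ref{p2}. By Theorem~\ref{t1}, a permutation $\sigma\in S_n$ lies in $\mathcal A(\tilde\alpha,0)$ precisely when $\sigma(1)=1$, $\sigma(n)=n$, and $|\sigma(i)-i|\le 1$ for all $1\le i\le n$. Since $n=r+1$, it therefore suffices to count the permutations of $\{1,\dots,n\}$ satisfying these three conditions and to show there are exactly $F_r=F_{n-1}$ of them.

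First I would eliminate the two endpoint constraints by a restriction argument. Because $\sigma$ is a bijection fixing $1$ and $n$, it must permute the middle block $\{2,\dots,n-1\}$ among itself. This suggests defining a map into $S_{n-2}$ by the index shift $\tau(j)=\sigma(j+1)-1$ for $1\le j\le n-2$. I would verify that $\tau$ is a well-defined permutation of $\{1,\dots,n-2\}$, that $|\tau(j)-j|=|\sigma(j+1)-(j+1)|\le 1$, and that the inverse construction $\sigma(1)=1$, $\sigma(n)=n$, $\sigma(i)=\tau(i-1)+1$ for $2\le i\le n-1$ recovers a valid permutation of the required form. This exhibits a bijection between $\mathcal A(\tilde\alpha,0)$ and the set $\{\tau\in S_{n-2}:|\tau(j)-j|\le 1\text{ for all }1\le j\le n-2\}$.

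Applying Lemma~\ref{p2} with $m=n-2=r-1$ then gives $|\mathcal A(\tilde\alpha,0)|=F_{(r-1)+1}=F_r$, settling the case $r\ge 2$. I would dispatch the edge case $r=1$ (where $n=2$ and $m=0$ falls outside the hypothesis of Lemma~\ref{p2}) directly: here $\mathcal A(\tilde\alpha,0)$ contains only the identity, so its cardinality is $1=F_1$, consistent with the formula. I do not expect a genuine obstacle here, since the substance of the result already lives in Theorem~\ref{t1} and Lemma~\ref{p2}; the only point demanding a little care is the index bookkeeping in the shift, namely confirming that the boundary conditions $\sigma(2)\ne 1$ and $\sigma(n-1)\ne n$ are enforced automatically by $\tau$ being a permutation of $\{1,\dots,n-2\}$, rather than needing to be imposed separately.
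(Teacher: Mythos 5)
Your proposal is correct and follows essentially the same route as the paper: invoke Theorem~\ref{t1} to identify $\mathcal A(\tilde\alpha,0)$ with the permutations of $\{1,\dots,n\}$ fixing the endpoints with displacement at most $1$, note this set is in bijection with $\{\tau\in S_{r-1}:|\tau(j)-j|\le 1\}$, and apply Lemma~\ref{p2}. You are merely more explicit than the paper in writing out the index-shift bijection and in checking the $r=1$ case separately, both of which the paper leaves implicit.
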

\begin{proof} By Theorem \ref{t1} we know that \begin{center}$\mathcal A(\tilde\alpha,0)=\{\sigma\in S_{r+1}|\;\sigma(1)=1,\sigma(r+1)=r+1\text{, and }|\sigma(i)-i|\le 1\text{, $\forall$ }1\le i\le r+1\}$.\end{center} Notice that the sets $\mathcal A(\tilde\alpha,0)$ and $\{\sigma\in S_{r-1}:\;|\sigma(i)-i|\le 1\text{, for all }1\le i\le r-1\}$ have the same cardinality. Therefore, by Lemma \ref{p2}, $|\mathcal A(\tilde\alpha,0)|=F_r$.
\end{proof}

%We now present an alternate description of $\mathcal A(\tilde\alpha,0)$ in terms of root reflections. For each $\alpha\in\Phi$ define the root reflection $s_\alpha:\mathfrak h^*\rightarrow\mathfrak h^*$ by $s_\alpha(\beta)=\beta-\frac{2(\beta,\alpha)}{(\alpha,\alpha)}\alpha$, for any $\beta\in\mathfrak h^*$. The Weyl group is generated by the reflections $\{s_\alpha:\alpha\in\Delta\}$, see Lemma 3.1.6 in \cite{GW}. For $1\le i\le r$, let $s_i$ denote the simple root reflection corresponding to $\alpha_i\in\Delta$. Then $s_{i}(\varepsilon_k)=\varepsilon_{\sigma(k)}$, where $\sigma$ is the neighboring transposition $(i\;\;i+1)\in S_n$.
\begin{rmk}
For $1\le i\le r$, let $s_i$ denote the simple root reflection corresponding to $\alpha_i\in\Delta$. Then $s_{i}(\varepsilon_k)=\varepsilon_{\sigma(k)}$, where $\sigma$ is the neighboring transposition $(i\;\;i+1)\in S_n$. Notice $s_i s_j=s_j s_i$ if and only if $i$ and $j$ are non-consecutive integers between $1$ and $r$. Then the following corollary describes the elements of $\mathcal A(\tilde\alpha,0)$ as products of commuting simple root reflections.
\end{rmk}

\begin{cor}\label{reflections} Let $\sigma\in S_n$. Then $\sigma\in\mathcal A(\tilde\alpha,0)$ if and only if $\sigma=s_{{i_1}}s_{{i_2}}\cdots s_{{i_k}}$, for some non-consecutive integers $2\le i_1,\ldots,i_k\le r-1$.
\end{cor}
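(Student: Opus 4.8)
The plan is to combine the characterization of $\mathcal A(\tilde\alpha,0)$ furnished by Theorem \ref{t1} with the structural description in Proposition \ref{product}, and then to read off both the endpoint conditions and the commutativity condition as constraints on the indices $i_j$. First I would invoke Theorem \ref{t1}, so that $\sigma\in\mathcal A(\tilde\alpha,0)$ exactly when $\sigma(1)=1$, $\sigma(n)=n$, and $|\sigma(i)-i|\le 1$ for all $1\le i\le n$. The last condition is, by Proposition \ref{product}, equivalent to $\sigma$ being a product of commuting neighboring transpositions. Since the neighboring transposition $(j\;\;j+1)$ is precisely the simple reflection $s_j$, this writes $\sigma=s_{i_1}\cdots s_{i_k}$ with the factors pairwise commuting. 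By the remark preceding the corollary, $s_i$ and $s_j$ commute if and only if $i$ and $j$ are non-consecutive; in particular the factors are pairwise disjoint, so the indices $i_1,\ldots,i_k$ are distinct and pairwise non-consecutive.

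It then remains to show that the two endpoint conditions $\sigma(1)=1$ and $\sigma(n)=n$ are exactly what confines the indices to the range $2\le i_j\le r-1$. Here I would argue directly using disjointness: among neighboring transpositions, $s_1=(1\;\;2)$ is the only one moving the letter $1$, so (the factors being disjoint) $s_1$ appears in the product if and only if $\sigma(1)=2$; hence $\sigma(1)=1$ is equivalent to $1\notin\{i_1,\ldots,i_k\}$. Symmetrically, recalling $n=r+1$, the factor $s_r=(n-1\;\;n)$ is the only one moving the letter $n$, so $\sigma(n)=n$ is equivalent to $r\notin\{i_1,\ldots,i_k\}$. Combining these, the conditions of Theorem \ref{t1} hold precisely when $\sigma=s_{i_1}\cdots s_{i_k}$ with $2\le i_1,\ldots,i_k\le r-1$ pairwise non-consecutive, which is the asserted description. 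The converse direction is immediate: any such product fixes $1$ and $n$, and being a product of disjoint neighboring transpositions it satisfies $|\sigma(i)-i|\le 1$, so Proposition \ref{product} and Theorem \ref{t1} return it to $\mathcal A(\tilde\alpha,0)$.

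I expect no serious obstacle in this argument; the single point requiring care is the disjointness of the factors, which ensures that each letter is moved by at most one $s_{i_j}$ and hence lets me translate the endpoint conditions into membership conditions on the index set. That disjointness is supplied by the commutativity criterion in the remark together with Proposition \ref{product}, so the proof reduces to careful bookkeeping with the dictionary between simple reflections and neighboring transpositions.
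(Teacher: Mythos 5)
Your argument is correct and follows essentially the same route as the paper, whose entire proof is the single sentence that the corollary follows from Theorem \ref{t1} and Proposition \ref{product}. Your write-up simply supplies the bookkeeping the paper leaves implicit: the disjointness of the commuting factors and the observation that the endpoint conditions $\sigma(1)=1$, $\sigma(n)=n$ are exactly the exclusion of $s_1$ and $s_r$ from the product.
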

\begin{proof}The corollary follows from Theorem \ref{t1}, and Proposition \ref{product}.\end{proof}

\section{A $q$-analog}

The $q$-analog of Kostant's partition function is the polynomial valued function, $\wp_q$, defined on $\mathfrak h^*$ by
\begin{align*}
\wp_q(\xi)=c_0+c_1q+\cdots+c_k q^k,
\end{align*} where $c_j$= number of ways to write $\xi$ as a non-negative integral sum of exactly $j$ positive roots, for $\xi\in\mathfrak h^*$.
Lusztig introduced the $q$-analog of Kostant's weight multiplicity formula by defining a polynomial, which when evaluated at $1$ gives the multiplicity of the dominant weight $\mu$ in the irreducible module $L(\lambda)$~\cite{LL}. This formula is given by
\begin{align*} m_q(\lambda,\mu)=\sum\limits_{\sigma\in W}^{}\epsilon(\sigma)\wp_q(\sigma(\lambda+\rho)-(\mu+\rho)).\end{align*}

Let $\mathfrak g$ be a simple Lie algebra of rank $r$. In the case when $\tilde\alpha$ is the highest root of $\mathfrak g$, it is known that $m_q(\tilde\alpha,0)=\sum\limits_{i=1}^{r} q^{e_i}$, where $e_1,\ldots,e_r$ are the exponents of $\mathfrak g$~\cite{Kostant}. For $r\ge 1$, the exponents of $\mathfrak {sl}_{r+1}$ are $1,2,\ldots,r$~\cite{H}. In this section we use the Weyl alternation set $\mathcal A(\tilde\alpha,0)$ to give a combinatorial proof of:

\begin{thm}\label{t2}
If $\tilde\alpha$ is the highest root of $\mathfrak {sl}_{r+1}$, then $m_q(\tilde\alpha,0)=q+q^2+\cdots+q^r$.
\end{thm}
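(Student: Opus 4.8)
The plan is to evaluate the alternating sum defining $m_q(\tilde\alpha,0)$ term by term over the Weyl alternation set, exploiting the explicit description of that set from Corollary \ref{reflections}. First I would observe that $\wp_q(\xi)$ is a polynomial with non-negative integer coefficients whose value at $q=1$ is $\wp(\xi)$, so $\wp_q(\xi)$ is the zero polynomial precisely when $\wp(\xi)=0$. Hence every $\sigma\notin\mathcal A(\tilde\alpha,0)$ contributes nothing, and
\[
m_q(\tilde\alpha,0)=\sum_{\sigma\in\mathcal A(\tilde\alpha,0)}\epsilon(\sigma)\,\wp_q(\sigma(\tilde\alpha+\rho)-\rho).
\]
By Corollary \ref{reflections} each such $\sigma$ is $\sigma=s_{i_1}s_{i_2}\cdots s_{i_k}$ for some non-consecutive integers $2\le i_1<\cdots<i_k\le r-1$; this is a reduced word in distinct commuting generators, so $\ell(\sigma)=k$ and $\epsilon(\sigma)=(-1)^k$.

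Next I would compute the weight $\xi_\sigma:=\sigma(\tilde\alpha+\rho)-\rho$ explicitly as a sum of simple roots. Writing $\xi_\sigma=\sum_{m=1}^r c_m\alpha_m$, the coefficient $c_m$ equals $(\varpi_m,\xi_\sigma)$, which by Proposition \ref{p1} is $1$ when $\sigma$ stabilizes $\{1,\ldots,m\}$ and $0$ when $\sigma(m)=m+1$. A short check shows $c_m=1$ exactly when $m\notin\{i_1,\ldots,i_k\}$, so
\[
\xi_\sigma=\sum_{m\notin\{i_1,\ldots,i_k\}}\alpha_m .
\]
This is a $0/1$ combination of the simple roots whose support, obtained by deleting the $k$ non-consecutive interior indices $i_1,\ldots,i_k$ from $\{1,\ldots,r\}$, is a disjoint union of exactly $k+1$ maximal blocks of consecutive simple roots, of total length $r-k$.

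The main computation is then $\wp_q(\xi_\sigma)$, and this is where I expect the real work to lie. The key structural point is that every simple-root coefficient of $\xi_\sigma$ is at most $1$. Since each positive root has the form $\varepsilon_a-\varepsilon_b=\alpha_a+\cdots+\alpha_{b-1}$, any expression of $\xi_\sigma$ as a sum of positive roots must use roots whose supports are pairwise disjoint contiguous intervals that together tile the support of $\xi_\sigma$; in particular no positive root may bridge a deleted index. Thus $\wp_q$ factors as a product over the $k+1$ blocks, and on a block of length $\ell$ the admissible decompositions are exactly the compositions of the block into consecutive sub-blocks, one positive root per sub-block, contributing $\sum_{p\ge 1}\binom{\ell-1}{p-1}q^p=q(1+q)^{\ell-1}$. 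Multiplying over the blocks, whose lengths sum to $r-k$, gives
\[
\wp_q(\xi_\sigma)=q^{\,k+1}(1+q)^{\,r-2k-1}.
\]

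Finally I would assemble the sum. The contribution $(-1)^k q^{k+1}(1+q)^{r-2k-1}$ depends only on $k$, and the number of $\sigma\in\mathcal A(\tilde\alpha,0)$ with exactly $k$ reflections equals the number of $k$-element non-consecutive subsets of $\{2,\ldots,r-1\}$, namely $\binom{r-1-k}{k}$. Therefore
\[
m_q(\tilde\alpha,0)=q\sum_{k\ge 0}(-1)^k\binom{r-1-k}{k}q^{k}(1+q)^{\,r-1-2k}.
\]
To finish, I would recognize the inner sum as the standard expansion of the Lucas-type polynomial $\frac{x^{r}-y^{r}}{x-y}=\sum_{i=0}^{r-1}x^{i}y^{\,r-1-i}$ evaluated at the pair $\{x,y\}$ with $x+y=1+q$ and $xy=q$, that is $\{x,y\}=\{1,q\}$. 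This yields $\frac{1-q^{r}}{1-q}=1+q+\cdots+q^{r-1}$, and hence $m_q(\tilde\alpha,0)=q+q^2+\cdots+q^{r}$, as claimed. The two steps I expect to be most delicate are justifying the product factorization of $\wp_q(\xi_\sigma)$ across the blocks (which rests entirely on the $0/1$ coefficient structure forcing disjoint supports) and correctly matching the resulting alternating binomial sum to the Fibonacci/Lucas polynomial identity.
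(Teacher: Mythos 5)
Your proposal is correct and follows essentially the same route as the paper: you restrict the alternating sum to $\mathcal A(\tilde\alpha,0)$, show $\sigma(\tilde\alpha+\rho)-\rho=\tilde\alpha-\sum_j\alpha_{i_j}$ and compute $\wp_q$ of it as $q^{1+k}(1+q)^{r-1-2k}$ by a tiling argument (the paper's Lemma \ref{p3} and Proposition \ref{p6}), count the length-$k$ elements by $\binom{r-1-k}{k}$ (Lemma \ref{p5}), and evaluate the resulting alternating binomial sum (Proposition \ref{p7}). The only cosmetic differences are that your block-by-block product factorization of $\wp_q$ makes the paper's ``lines in slots'' count more explicit, and you derive the closing identity from the factorization $\sum_{i=0}^{r-1}x^iy^{r-1-i}$ at $\{x,y\}=\{1,q\}$ rather than citing Wilf's equation (4.3.7).
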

\begin{cor}\label{c1}
If $\tilde\alpha$ is the highest root of $\mathfrak {sl}_{r+1}$, then $m(\tilde\alpha,0)=r$.
\end{cor}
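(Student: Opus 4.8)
The plan is to prove Theorem \ref{t2}; the final statement, Corollary \ref{c1}, then follows at once by setting $q=1$, since $m_q(\tilde\alpha,0)$ specializes to $m(\tilde\alpha,0)$ and $q+q^2+\cdots+q^r$ specializes to $r$. So I concentrate on the $q$-analog. First I would reduce the defining sum to the Weyl alternation set: because $\wp_q$ has non-negative integer coefficients and $\wp_q(\xi)$ evaluated at $q=1$ equals $\wp(\xi)$, the polynomial $\wp_q(\xi)$ vanishes exactly when $\wp(\xi)=0$, so only $\sigma\in\mathcal A(\tilde\alpha,0)$ contribute and
\[
m_q(\tilde\alpha,0)=\sum_{\sigma\in\mathcal A(\tilde\alpha,0)}\epsilon(\sigma)\,\wp_q(\sigma(\tilde\alpha+\rho)-\rho).
\]
By Corollary \ref{reflections} I would index $\mathcal A(\tilde\alpha,0)$ by the subsets $S\subseteq\{2,\ldots,r-1\}$ of non-consecutive integers via $\sigma_S=\prod_{i\in S}s_i$; since the $s_i$ are distinct commuting simple reflections, $\ell(\sigma_S)=|S|$ and $\epsilon(\sigma_S)=(-1)^{|S|}$.

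The key step is to compute $\wp_q(\xi_S)$ for $\xi_S=\sigma_S(\tilde\alpha+\rho)-\rho$. Using the coordinate description and $\sigma_S=\sigma_S^{-1}$, one finds that $\xi_S$ has coordinate $+1$ in position $1$, $-1$ in position $n$, and, for each $i\in S$, a $-1$ in position $i$ and a $+1$ in position $i+1$, with all remaining coordinates $0$. Writing $\xi_S$ as a non-negative sum of positive roots $\varepsilon_a-\varepsilon_b$ is a rightward flow problem on the line $1,2,\ldots,n$, in which the number of roots crossing the gap between positions $i$ and $i+1$ equals the partial sum $(\varpi_i,\xi_S)$, which by Proposition \ref{p1} is $0$ or $1$. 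Since at most one root crosses each gap, the roots are non-overlapping and the decomposition factors over the maximal runs of gaps carrying flow $1$. A run spanning $L$ gaps has a source at its left node, a sink at its right node, and $L-1$ interior nodes at which one may or may not break the single strand, so its generating function, weighted by the number of roots, is $q(1+q)^{L-1}$. For $|S|=k$ there are exactly $k+1$ runs whose lengths sum to $r-k$, whence
\[
\wp_q(\xi_S)=q^{k+1}(1+q)^{r-2k-1}.
\]

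Finally I would count and sum. The number of non-consecutive $k$-subsets of the $(r-2)$-element set $\{2,\ldots,r-1\}$ is $\binom{r-1-k}{k}$, so
\[
m_q(\tilde\alpha,0)=\sum_{k\ge 0}(-1)^k\binom{r-1-k}{k}q^{k+1}(1+q)^{r-2k-1}.
\]
To evaluate this I would factor out $q(1+q)^{r-1}$ and invoke the Fibonacci-polynomial identity $\sum_k\binom{m-k}{k}z^k=(\alpha^{m+1}-\beta^{m+1})/(\alpha-\beta)$, where $\alpha,\beta$ are the roots of $t^2-t-z$, taking $m=r-1$ and $z=-q/(1+q)^2$. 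Here $1+4z=(1-q)^2/(1+q)^2$, so $\alpha=1/(1+q)$ and $\beta=q/(1+q)$, and the sum collapses to $(1-q^r)/\bigl((1+q)^{r-1}(1-q)\bigr)$; multiplying back gives $q(1-q^r)/(1-q)=q+q^2+\cdots+q^r$. Setting $q=1$ yields $m(\tilde\alpha,0)=r$, which is Corollary \ref{c1}.

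I expect the main obstacle to be the computation of $\wp_q(\xi_S)$: one must argue carefully that the positive-root decompositions factor over the runs, using the $0/1$ partial-sum constraint furnished by Proposition \ref{p1}, and keep correct track of the run lengths. The concluding summation, though it relies on a standard generating-function identity, could instead be handled by induction on $r$ using the Fibonacci-type recurrence already present in Lemma \ref{p2}.
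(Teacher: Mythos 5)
Your proposal is correct and follows essentially the same route as the paper: Corollary \ref{c1} is deduced by specializing Theorem \ref{t2} at $q=1$, and your derivation of $m_q(\tilde\alpha,0)$ reproduces the paper's Lemma \ref{p5}, Proposition \ref{p6} (your run/flow computation of $\wp_q(\xi_S)$ is the paper's ``lines in slots'' argument, if anything justified more carefully), and Proposition \ref{p7} (which the paper cites from Wilf rather than deriving via the Binet-type formula). All steps check out.
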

\begin{proof} This follows from Theorem \ref{t2} and the fact that $m_q(\tilde\alpha,0)|_{q=1}=m(\tilde\alpha,0)$.
\end{proof}

Throughout the remainder of this section let $r\ge 1$, and let $\tilde\alpha$ denote the highest root of $\mathfrak {sl}_{r+1}$. We leave the proofs of Lemmas \ref{p3} and \ref{p5} to the reader.

\begin{lemma}\label{p3}
Let $\sigma=s_1 s_2\cdots s_k\in\mathcal A(\tilde\alpha,0)$, where $i_1,\ldots,i_{k}$ are non-consecutive integers between $2$ and $r-1$. Then $\sigma(\tilde\alpha+\rho)-\rho=\tilde\alpha-\sum_{j=1}^k \alpha_{i_j}$.
\end{lemma}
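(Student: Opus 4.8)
The plan is to use linearity of the $W$-action and split
$\sigma(\tilde\alpha+\rho)-\rho = \sigma(\tilde\alpha) + \big(\sigma(\rho)-\rho\big)$,
which reduces the claim to two independent facts: that $\sigma$ fixes the highest root $\tilde\alpha$, and that $\sigma$ translates $\rho$ by exactly $-\sum_{j=1}^k\alpha_{i_j}$. (Here I read the hypothesis as $\sigma=s_{i_1}s_{i_2}\cdots s_{i_k}$ with $2\le i_1,\dots,i_k\le r-1$ pairwise non-consecutive, matching Corollary \ref{reflections}.) Each simple reflection $s_{i_j}$ acts on $\mathfrak h^*$ as the neighboring transposition $(i_j\;\;i_j+1)$ of the $\varepsilon$'s, and since $2\le i_j$ and $i_j+1\le r=n-1$, it fixes both $\varepsilon_1$ and $\varepsilon_n$; hence $\sigma(\tilde\alpha)=\sigma(\varepsilon_1-\varepsilon_n)=\varepsilon_1-\varepsilon_n=\tilde\alpha$, disposing of the first fact immediately.

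For the second fact I would show $\sigma(\rho)=\rho-\sum_{j=1}^k\alpha_{i_j}$ by induction on $k$, using the standard identity $s_i(\rho)=\rho-\alpha_i$ together with the type-$A$ relation $s_{i_a}(\alpha_{i_b})=\alpha_{i_b}$ whenever $|i_a-i_b|\ge 2$, which holds precisely because the $i_j$ are non-consecutive: peeling the leftmost reflection off $\sigma$ and commuting it past the previously computed shift gives the inductive step. Equivalently, and entirely inside the paper's coordinate framework, one writes $\rho=\sum_{m}(n-m)\varepsilon_m$ and observes that the disjoint transpositions $(i_j\;\;i_j+1)$ merely swap the $i_j$- and $(i_j+1)$-coordinates $n-i_j$ and $n-i_j-1$ of $\rho$; each such swap alters $\rho$ by $-\varepsilon_{i_j}+\varepsilon_{i_j+1}=-\alpha_{i_j}$, and disjointness guarantees these changes land in distinct coordinates and therefore simply add.

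Combining the two computations yields $\sigma(\tilde\alpha+\rho)-\rho=\tilde\alpha-\sum_{j=1}^k\alpha_{i_j}$, as desired. As a cross-check I would match coordinates against the simplified expression $\sigma(\tilde\alpha+\rho)-\rho=(1,\,2-\sigma(2),\dots,(n-1)-\sigma(n-1),\,-1)$ from the proof of Proposition \ref{p1} (valid since $\sigma=\sigma^{-1}$ by Proposition \ref{product}): its $i$-th entry $i-\sigma(i)$ is $-1$ when $i=i_j$, is $+1$ when $i=i_j+1$, and is $0$ otherwise, exactly reproducing the coordinates of $\tilde\alpha-\sum_j\alpha_{i_j}$. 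The main obstacle is only the bookkeeping forced by non-consecutivity: one must confirm that no index is simultaneously of the form $i_j$ and $i_{j'}+1$, which is what prevents the $\varepsilon$-contributions of distinct $\alpha_{i_j}$ from cancelling and what makes $\sigma$ a product of genuinely disjoint transpositions. This is a mild verification rather than a deep difficulty.
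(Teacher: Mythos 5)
Your proof is correct and complete. The paper leaves Lemma \ref{p3} to the reader, so there is no printed argument to compare against; your decomposition $\sigma(\tilde\alpha+\rho)-\rho=\sigma(\tilde\alpha)+(\sigma(\rho)-\rho)$, the observation that the disjoint transpositions $(i_j\;\;i_j+1)$ with $2\le i_j\le r-1$ fix $\varepsilon_1$ and $\varepsilon_n$ (hence fix $\tilde\alpha$), and the computation $\sigma(\rho)=\rho-\sum_j\alpha_{i_j}$ via $s_i(\rho)=\rho-\alpha_i$ together with non-consecutivity are exactly the intended routine verification, and your coordinate cross-check against the expression $(1,\,2-\sigma(2),\ldots,(n-1)-\sigma(n-1),\,-1)$ confirms it. You also correctly repair the statement's typo by reading $s_1s_2\cdots s_k$ as $s_{i_1}s_{i_2}\cdots s_{i_k}$.
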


%\begin{lemma}\label{p3}
%Let $\sigma=s_{i_1}s_{i_2}\cdots s_{i_k}\in\mathcal A(\tilde\alpha,0)$, where $i_1,\ldots,i_{k}$ are non-consecutive integers between $2$ and $r-1$. Then $\sigma(\tilde\alpha+\rho)-\rho=\tilde\alpha-\sum_{j=1}^k \alpha_{i_j}$.
%\end{lemma}

\begin{lemma}\label{p5}
The cardinality of the set $\{\sigma\in\mathcal A(\tilde\alpha,0)\;|\;\ell(\sigma)=k\}$ is $\binom{r-1-k}{k}$, and \\ $max\{\;\ell(\sigma)\;|\;\sigma\in\mathcal A(\tilde\alpha,0)\}=\lfloor \frac{r-1}{2}\rfloor$.
\end{lemma}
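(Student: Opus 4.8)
The plan is to translate the length statistic on $\mathcal{A}(\tilde\alpha,0)$ into a purely combinatorial problem about subsets of $\{2,3,\ldots,r-1\}$ via the parametrization of Corollary~\ref{reflections}, and then count. By that corollary, every $\sigma\in\mathcal{A}(\tilde\alpha,0)$ is of the form $\sigma=s_{i_1}s_{i_2}\cdots s_{i_k}$ for some set $\{i_1,\ldots,i_k\}$ of pairwise non-consecutive integers with $2\le i_j\le r-1$. First I would check that this correspondence is a genuine bijection between $\mathcal{A}(\tilde\alpha,0)$ and the collection of such subsets: since the $i_j$ are non-consecutive, the neighboring transpositions $(i_j\ \ i_j{+}1)$ are disjoint and act on separated pairs of symbols, so $\sigma$ moves exactly the symbols $\{i_j,i_j{+}1:1\le j\le k\}$ and its support recovers the subset $\{i_1,\ldots,i_k\}$ unambiguously.

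The key step is to show $\ell(\sigma)=k$ for such $\sigma$. Because the simple reflections $s_{i_1},\ldots,s_{i_k}$ are distinct and pairwise commuting (the commutation criterion recorded in the remark preceding Corollary~\ref{reflections}), the word $s_{i_1}\cdots s_{i_k}$ is reduced, so its length equals its number of factors. I would verify this directly by counting inversions: each disjoint neighboring transposition $(i_j\ \ i_j{+}1)$ contributes exactly the single inversion $(i_j,i_j{+}1)$, and because the transpositions act on disjoint, non-adjacent pairs of symbols they create no further inversions, giving $\ell(\sigma)=k$. Consequently $\{\sigma\in\mathcal{A}(\tilde\alpha,0):\ell(\sigma)=k\}$ is in bijection with the $k$-element subsets of non-consecutive integers in $\{2,3,\ldots,r-1\}$.

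It remains to count these subsets. The set $\{2,3,\ldots,r-1\}$ consists of $r-2$ consecutive integers, and the number of $k$-subsets no two of whose elements are consecutive is a standard gap count: writing the chosen elements as $i_1<\cdots<i_k$ and setting $j_t=i_t-(t-1)$ gives a bijection onto arbitrary $k$-subsets of a set of size $(r-2)-(k-1)=r-1-k$, so the count is $\binom{r-1-k}{k}$, as claimed. Finally, the maximum length is the largest $k$ for which this binomial coefficient is positive, i.e. for which $r-1-k\ge k$; this forces $k\le\frac{r-1}{2}$, hence $\max\{\ell(\sigma):\sigma\in\mathcal{A}(\tilde\alpha,0)\}=\lfloor\frac{r-1}{2}\rfloor$.

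The only point requiring genuine care is the length identity $\ell(\sigma)=k$; everything else is bookkeeping. I expect the inversion count to be the cleanest route, since it avoids appealing to general reduced-word theory and makes transparent why the non-consecutiveness hypothesis is precisely what prevents the transpositions from interfering and thereby keeps the expression reduced.
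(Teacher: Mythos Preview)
Your argument is correct. The paper itself does not prove this lemma---it explicitly leaves the proofs of Lemmas~\ref{p3} and~\ref{p5} to the reader---so there is nothing to compare against beyond noting that your approach is exactly the one the surrounding text invites: invoke Corollary~\ref{reflections} to parametrize $\mathcal{A}(\tilde\alpha,0)$ by subsets of pairwise non-consecutive indices in $\{2,\ldots,r-1\}$, check that the length of the corresponding element equals the size of the subset, and then apply the standard stars-and-bars count for non-consecutive $k$-subsets of an $(r-2)$-element interval.
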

We now prove the following combinatorial identity:
\begin{prop}\label{p6}
If $\sigma\in\mathcal A(\tilde\alpha,0)$, then $\wp_q (\sigma(\tilde\alpha+\rho)-\rho)=q^{1+\ell(\sigma)}(1+q)^{r-1-2\ell(\sigma)}$.
\end{prop}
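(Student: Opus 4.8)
The plan is to compute $\wp_q(\sigma(\tilde\alpha+\rho)-\rho)$ directly by combining Lemma~\ref{p3} with an explicit count of the ways to write the relevant weight as a sum of exactly $j$ positive roots. First I would invoke Lemma~\ref{p3} to rewrite $\sigma(\tilde\alpha+\rho)-\rho=\tilde\alpha-\sum_{j=1}^k\alpha_{i_j}$, where $\sigma=s_{i_1}\cdots s_{i_k}$ with $i_1,\dots,i_k$ non-consecutive integers between $2$ and $r-1$, and $k=\ell(\sigma)$. Writing $\tilde\alpha=\alpha_1+\alpha_2+\cdots+\alpha_r$, this weight becomes $\sum_{i\notin\{i_1,\dots,i_k\}}\alpha_i$, i.e.\ the sum of simple roots over the complementary index set $S=\{1,\dots,r\}\setminus\{i_1,\dots,i_k\}$. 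So the task reduces to counting, for each $j$, the number of ways to express a consecutive-index-free sum of this particular form as a non-negative integral combination of exactly $j$ positive roots of $\mathfrak{sl}_{r+1}$.

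The key structural observation I would use is that a non-negative integral combination of positive roots $\varepsilon_a-\varepsilon_b$ equal to $\sum_{i\in S}\alpha_i$ corresponds to a way of partitioning the multiset of simple-root contributions into contiguous blocks. Concretely, $\sum_{i\in S}\alpha_i$ (with each $\alpha_i$ appearing with coefficient $1$) is a sum of positive roots where each summand $\varepsilon_a-\varepsilon_b=\alpha_a+\alpha_{a+1}+\cdots+\alpha_{b-1}$ must be a \emph{consecutive} run of simple roots all lying in $S$; distinct summands use disjoint index sets. Because the removed indices $\{i_1,\dots,i_k\}$ are non-consecutive, the set $S$ splits $\{1,\dots,r\}$ into maximal runs whose lengths I would determine by a short combinatorial analysis: removing $k$ pairwise-non-adjacent indices from $2,\dots,r-1$ leaves exactly $r-1-2\ell(\sigma)$ internal ``gaps'' that can independently be either merged with a neighbor or split off, and this is precisely the exponent appearing in the statement. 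Each such gap contributes a factor $(1+q)$ (split versus not split, changing the summand count by one), and the baseline decomposition into the forced contiguous blocks contributes the monomial $q^{1+\ell(\sigma)}$.

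The main obstacle will be making the contiguity-and-gap bookkeeping precise: I must show that the number of positive roots in the minimal decomposition is exactly $1+k$, and that the remaining freedom factors cleanly as $(1+q)^{r-1-2k}$ rather than producing cross-terms. To handle this I would set up the decomposition as a composition (ordered partition) of each maximal run of $S$ into contiguous sub-runs, observe that a run of length $\ell$ admits a generating function $q\,(1+q)^{\ell-1}$ for its partitions weighted by number of parts (one forced part, with $\ell-1$ independent ``cut or no cut'' choices between adjacent simple roots), and then multiply over all maximal runs. The final step is the arithmetic verification that summing the part-counts gives $1+\ell(\sigma)$ for the minimum and that the total number of free cuts equals $r-1-2\ell(\sigma)$, which follows from Lemma~\ref{p5}'s implicit description of how the non-consecutive $i_j$ sit inside $\{2,\dots,r-1\}$. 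I expect the only genuinely delicate point to be the boundary behavior at indices $1$ and $r$, since $\tilde\alpha$ always includes $\alpha_1$ and $\alpha_r$ while the reflections only range over $2\le i_j\le r-1$; I would check separately that these endpoint simple roots are always present in $S$ and merge correctly, accounting for the $-1$ shift in the exponent $r-1-2\ell(\sigma)$ and the $+1$ in $q^{1+\ell(\sigma)}$.
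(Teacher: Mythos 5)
Your proposal is correct and follows essentially the same route as the paper: both identify $\sigma(\tilde\alpha+\rho)-\rho$ with $\sum_{i\in S}\alpha_i$ via Lemma~\ref{p3} and count decompositions into positive roots by choosing ``cuts'' among the $r-1-2\ell(\sigma)$ adjacent pairs surviving inside the maximal runs of $S$, the paper doing this as a single global count $c_{k+1+i}=\binom{r-1-2k}{i}$ and you as a product of per-run factors $q(1+q)^{\ell-1}$, which is the same computation organized multiplicatively (and arguably spelled out more carefully). The only small slip is attributing the placement of the non-consecutive indices $i_j$ in $\{2,\dots,r-1\}$ to Lemma~\ref{p5}; that description comes from Corollary~\ref{reflections}.
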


\begin{proof}
%Recall that .
If $\sigma\in\mathcal A(\tilde\alpha,0)$ with $\ell(\sigma)=0$, then $\sigma=1$ and $\sigma(\tilde\alpha+\rho)-\rho=\tilde\alpha=\alpha_1+\cdots+\alpha_r$. Since $\Phi^+=\{\alpha_i:1\le i\le r\}\cup \{\alpha_i+\cdots+\alpha_j:1\le i<j\le r\}$, for any $i\ge 0$, we can think of $c_{i+1}$, the coefficient of $q^{i+1}$ in $\wp_q(\alpha_1+\cdots+\alpha_r)$, as the number of ways to place $i$ lines in $r-1$ slots. Hence $c_{i+1}=\binom{r-1}{i}$ and $\wp_q(\tilde\alpha)=\sum_{i=0}^{r-1}\binom{r-1}{i}q^{i+1}=q(1+q)^{r-1}$.

If $\sigma\in\mathcal A(\tilde\alpha,0)$ with $\ell(\sigma)=k\neq 0$, then Corollary \ref{reflections} implies that $\sigma=s_1 s_2\cdots s_k$, for some non-consecutive integers $2\le i_1,i_2,\ldots,i_{k}\le r-1$. Then by Lemma \ref{p3}, $\sigma(\tilde\alpha+\rho)-\rho =\tilde\alpha-\sum_{j=1}^{k}\alpha_{i_j}$. Let $c_j$ denote the coefficient of $q^j$ in $\wp_q(\sigma(\tilde\alpha+\rho)-\rho)$. Since $\sigma$ subtracts $k$ many non-consecutive simple roots from $\tilde\alpha$, we will at a minimum need $k+1$ positive roots to write $\tilde\alpha-\sum_{j=1}^{k}\alpha_{i_j}$. So $c_j=0$, whenever $j< k+1$. Also observe that  $\tilde\alpha-\sum_{j=1}^{k}\alpha_{i_j}$ can be written with at most $r-k$ positive roots. Hence $c_j=0$, whenever $j> n-k$.

For $i\ge 0$, we can think of $c_{k+1+i}$ as the number of ways to place $i$ lines in $r-1-2k$ slots. This is because for each simple root that $\sigma$ removes from $\tilde\alpha$, we lose 2 slots in which to place a line, one before and one after. So $c_{k+1+i}=\binom{r-1-2k}{i}$, whenever $0\le i\le r-1-2k$.

Therefore $\wp_q(\sigma(\tilde\alpha+\rho)-\rho)=\sum\limits_{i=0}^{r-1-2k}\binom{r-1-2k}{i}q^{k+1+i}=q^{1+k}(1+q)^{r-1-2k}$.
\end{proof}
We obtain the following closed formula of Kostant's partition function by setting $q=1$ in Proposition \ref{p6}.

\begin{lemma}\label{lemma}
If $\sigma\in\mathcal A(\tilde\alpha,0)$, then $\wp(\sigma(\tilde\alpha+\rho)-\rho)=2^{r-1-2\ell(\sigma)}$.
\end{lemma}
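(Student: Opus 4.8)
The plan is to derive this immediately from Proposition \ref{p6} by specializing the $q$-analog at $q=1$. First I would record the general fact that for any $\xi\in\mathfrak h^*$, evaluating $\wp_q(\xi)$ at $q=1$ recovers the ordinary Kostant partition function $\wp(\xi)$. Indeed, writing $\wp_q(\xi)=c_0+c_1 q+\cdots+c_k q^k$, the coefficient $c_j$ counts the number of ways to express $\xi$ as a non-negative integral sum of \emph{exactly} $j$ positive roots; summing over all $j$ therefore counts every expression of $\xi$ as a non-negative integral sum of positive roots exactly once, so that $\wp_q(\xi)|_{q=1}=\sum_j c_j=\wp(\xi)$.

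With this in hand, the lemma is a one-line substitution. Let $\sigma\in\mathcal A(\tilde\alpha,0)$ and set $k=\ell(\sigma)$. By Proposition \ref{p6},
\[
\wp_q(\sigma(\tilde\alpha+\rho)-\rho)=q^{1+k}(1+q)^{r-1-2k}.
\]
Evaluating at $q=1$ gives $1^{1+k}\cdot 2^{r-1-2k}=2^{r-1-2k}$, while the observation above identifies the left-hand side at $q=1$ with $\wp(\sigma(\tilde\alpha+\rho)-\rho)$. Hence $\wp(\sigma(\tilde\alpha+\rho)-\rho)=2^{r-1-2\ell(\sigma)}$, as claimed.

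There is essentially no obstacle here: the arithmetic content of the lemma is already contained in Proposition \ref{p6}, and the only thing to check is the specialization $\wp_q(\cdot)|_{q=1}=\wp(\cdot)$, which is immediate from the definitions of the two partition functions. The one point worth a moment's care is confirming that the exponent $r-1-2k$ is non-negative, so that $2^{r-1-2\ell(\sigma)}$ is the intended count; this is guaranteed by Lemma \ref{p5}, which bounds $\ell(\sigma)$ by $\lfloor (r-1)/2\rfloor$ for every $\sigma\in\mathcal A(\tilde\alpha,0)$.
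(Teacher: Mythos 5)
Your proof is correct and is exactly the paper's argument: the paper states the lemma is obtained "by setting $q=1$ in Proposition \ref{p6}," which is precisely your specialization, together with the (immediate) observation that $\wp_q(\xi)|_{q=1}=\wp(\xi)$. Nothing further is needed.
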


The following proposition will be used in the proof of Theorem \ref{t2}.
\begin{prop}\label{p7}
For $r\ge 1$, $\displaystyle\sum_{k=0}^{\lfloor \frac{r-1}{2}\rfloor}(-1)^k \binom{r-1-k}{k} q^{1+k}(1+q)^{r-1-2k}=\displaystyle\sum_{i=1}^{r}q^i$.
\end{prop}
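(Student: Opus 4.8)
The plan is to factor the common $q$ out of the left-hand side and then identify the resulting polynomial sum as a disguised geometric series. Setting $m=r-1$ and extracting $q$ from each summand, the identity in Proposition \ref{p7} is equivalent to
\begin{align*}
S_m(q):=\sum_{k=0}^{\lfloor m/2\rfloor}(-1)^k\binom{m-k}{k}q^k(1+q)^{m-2k}=1+q+\cdots+q^{m},
\end{align*}
since multiplying through by $q$ turns the right-hand side $q(1+q+\cdots+q^{m})$ into $\sum_{i=1}^{r}q^i$. Thus it suffices to prove the displayed equality for all $m\ge 0$.

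First I would isolate the shape of the summand. For indeterminates $s$ and $p$, define
\begin{align*}
P_m(s,p)=\sum_{k=0}^{\lfloor m/2\rfloor}(-1)^k\binom{m-k}{k}p^k s^{m-2k},
\end{align*}
so that $S_m(q)=P_m(1+q,q)$. The key structural fact is that these polynomials obey the three-term recurrence $P_m=sP_{m-1}-pP_{m-2}$ for $m\ge 2$, with $P_0=1$ and $P_1=s$. I would establish this by substituting Pascal's rule $\binom{m-k}{k}=\binom{m-1-k}{k}+\binom{m-1-k}{k-1}$ into the definition of $P_m$: the first binomial reproduces $sP_{m-1}$, while the second, after reindexing $k\mapsto k+1$, reproduces $-pP_{m-2}$. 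This is the computational heart of the argument and the step most prone to off-by-one errors in the summation limits, so care with the boundary terms is the main obstacle.

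Next I would check that the target $1+q+\cdots+q^{m}$ satisfies the \emph{same} recurrence in $m$ under the specialization $s=1+q$, $p=q$. Indeed a direct expansion gives $(1+q)(1+q+\cdots+q^{m-1})-q(1+q+\cdots+q^{m-2})=1+q+\cdots+q^{m}$, and the base cases match since $P_0(1+q,q)=1$ and $P_1(1+q,q)=1+q$. Since both sequences satisfy the identical two-step linear recurrence with the same pair of initial values, induction on $m$ forces $S_m(q)=P_m(1+q,q)=1+q+\cdots+q^{m}$, which completes the proof once the factor of $q$ is restored. Equivalently, one may recognize that $P_m(s,p)=\frac{x^{m+1}-y^{m+1}}{x-y}$ whenever $x+y=s$ and $xy=p$, and then specialize to the roots $x=1$, $y=q$ of $t^2-(1+q)t+q=0$, which yields $P_m(1+q,q)=\frac{1-q^{m+1}}{1-q}$ directly; I would likely present the inductive version to keep the note self-contained.
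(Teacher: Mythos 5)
Your proof is correct, but it takes a genuinely different (and more self-contained) route than the paper. The paper disposes of this proposition by quoting Equation (4.3.7) of Wilf's \emph{generatingfunctionology}, namely $\sum_{k\le n/2}(-1)^k\binom{n-k}{k}q^k(1+q)^{n-2k}=\frac{1-q^{n+1}}{1-q}$, setting $n=r-1$, and multiplying by $q$; the entire content of the argument is outsourced to that reference. You instead prove the quoted identity from scratch: after the same normalization $m=r-1$ and extraction of the factor $q$, you introduce the two-variable polynomials $P_m(s,p)=\sum_k(-1)^k\binom{m-k}{k}p^ks^{m-2k}$, derive the three-term recurrence $P_m=sP_{m-1}-pP_{m-2}$ from Pascal's rule (your handling of the boundary terms is sound, since the out-of-range binomials vanish), verify that $1+q+\cdots+q^m$ satisfies the same recurrence under $s=1+q$, $p=q$ with matching initial values $1$ and $1+q$, and close by induction. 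Your parenthetical observation that $P_m(s,p)=\frac{x^{m+1}-y^{m+1}}{x-y}$ for $x+y=s$, $xy=p$, specialized at the roots $x=1$, $y=q$ of $t^2-(1+q)t+q$, is exactly the generating-function mechanism behind Wilf's formula, so the two arguments are ultimately the same identity seen from opposite ends. What your version buys is self-containment and a transparent link to the Fibonacci recurrence already present in the paper (at $s=p=1$ your $P_m$ recurrence is literally $F_{m+1}=F_m+F_{m-1}$, echoing the remark following the proposition); what the paper's version buys is brevity.
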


\begin{proof}
Equation (4.3.7) in \cite{Wilf} shows that for integers $k$ and $n\ge 0,$
\begin{align}
\displaystyle\sum_{k\le \frac{n}{2}}^{}(-1)^k \binom{n-k}{k} q^{k}(1+q)^{n-2k}=\frac{1-q^{n+1}}{1-q}.\label{wilfeq}
\end{align}
Suppose $r\ge 1$, and let $n=r-1\ge 0$. Then by (\ref{wilfeq}) we have that
\begin{align*}
\displaystyle\sum_{k=0}^{\lfloor \frac{r-1}{2}\rfloor}(-1)^k \binom{r-1-k}{k} q^{1+k}(1+q)^{r-1-2k}%&=q\displaystyle\sum_{k\le \frac{n}{2}}^{}(-1)^k \binom{n-k}{k} q^{k}(1+q)^{n-2k}\\
&=q\left(\frac{1-q^{n+1}}{1-q}\right).
\end{align*}
Now observe that $\displaystyle\sum_{i=1}^{r}q^i=\displaystyle\sum_{i=1}^{n+1}q^i=q\displaystyle\sum_{i=0}^{n}q^i=q\left(\frac{1-q^{n+1}}{1-q}\right)$.

Therefore $\displaystyle\sum_{k=0}^{\lfloor \frac{r-1}{2}\rfloor}(-1)^k \binom{r-1-k}{k} q^{1+k}(1+q)^{r-1-2k}=\displaystyle\sum_{i=1}^{r}q^i.$
\end{proof}

\begin{rmk} Suppose $r\ge 1$. If we define $F_r(t)=\sum_{k=0}^{\infty}\binom{r-1-k}{k}t^k$, then $F_r(1)$ is the $r^{th}$ Fibonacci number. So $F_r(t)$ is a t-analog of the Fibonacci numbers. Also notice if $t=\frac{-q}{(1+q)^2}$, then $q(1+q)^{r-1}F_r(t)$ is the sum we encountered in Proposition \ref{p7}.
\end{rmk}

\begin{proof}[Proof of Theorem \ref{t2}] By Lemma \ref{p5} and Propositions \ref{p6} and \ref{p7}, if $k=\ell(\sigma)$, then
\begin{align*}
m_q(\tilde\alpha,0)&=\displaystyle\sum_{\sigma\in W}\epsilon(\sigma)\wp_q(\sigma(\tilde\alpha+\rho)-\rho)\\
&=\displaystyle\sum_{\sigma\in\mathcal A(\tilde\alpha,0)}\epsilon(\sigma)\wp_q(\sigma(\tilde\alpha+\rho)-\rho)\\
&=\displaystyle\sum_{k=0}^{\lfloor\frac{r-1}{2}\rfloor}(-1)^{k}\binom{r-1-k}{k}q^{1+k}(1+q)^{r-1-2k}\\
&=q+q^2+q^3+\cdots+q^{r}.\end{align*}\end{proof}

\section{Non-zero weight spaces}

It is fundamental in Lie theory that the zero weight space is a Cartan subalgebra. If $\tilde\alpha$ is the highest root of $\mathfrak g$, then the non-zero weights of $L(\tilde\alpha)$, the adjoint representation of $\mathfrak g$, are the roots and have multiplicity 1. We visit this picture from our point of view in the case when $\mathfrak g=\mathfrak {sl}_{r+1}$. Let $r\ge 1$, and $n=r+1$.

\begin{thm}\label{nonzero}
If $\mu\in P_+(\mathfrak{sl}_{n})$ and $\mu\neq 0$, then $\mathcal A(\tilde\alpha,\mu)=\begin{cases} \{1\}& \text{if $\mu=\tilde\alpha$}\\ \emptyset &\text{otherwise.}\end{cases}$
\end{thm}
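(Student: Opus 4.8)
The plan is to determine, for each dominant integral $\mu \neq 0$, exactly which $\sigma \in S_n$ satisfy $\wp(\sigma(\tilde\alpha+\rho)-(\mu+\rho)) > 0$, using the same strategy as in Theorem~\ref{t1}: by Proposition~\ref{prop1}, this positivity is equivalent to the conditions $(\varpi_i, \sigma(\tilde\alpha+\rho)-(\mu+\rho)) \in \mathbb{N}$ for all $1 \le i \le r$. First I would write $\mu = (\mu_1, \ldots, \mu_n)$ in the coordinate convention of Section~2, so that $(\varpi_i, \mu) = \mu_1 + \cdots + \mu_i$, and recall from~(\ref{1.3}) that $\sigma(\tilde\alpha+\rho)-\rho$ has explicit entries $a_{\sigma^{-1}(j)} - (n-j)$ built from the relabeling~(\ref{relabel}). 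The key observation is that $\sigma(\tilde\alpha+\rho)-(\mu+\rho) = (\sigma(\tilde\alpha+\rho)-\rho) - \mu$, so that $(\varpi_i, \sigma(\tilde\alpha+\rho)-(\mu+\rho)) = (\varpi_i, \sigma(\tilde\alpha+\rho)-\rho) - (\varpi_i,\mu)$.

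The crucial structural input is that $\tilde\alpha$ is the highest root, hence the highest weight of the adjoint representation $L(\tilde\alpha)$; since $\mu \neq 0$ is dominant, the only dominant weight of $L(\tilde\alpha)$ besides $0$ is $\tilde\alpha$ itself. This already tells us the multiplicity $m(\tilde\alpha,\mu)$ is $0$ unless $\mu = \tilde\alpha$ (where it is $1$) — but here the goal is the finer statement that the Weyl alternation set itself is empty or a singleton, which I would prove directly and combinatorially. I would split into two cases. For $\mu = \tilde\alpha = (1,0,\ldots,0,-1)$: here $\sigma(\tilde\alpha+\rho)-(\mu+\rho) = \sigma(\tilde\alpha+\rho)-\rho - \tilde\alpha$, and I would show that requiring all the partial sums $(\varpi_i,\cdot)$ to be nonnegative integers forces $\sigma$ to be the identity. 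Concretely, $\sigma = 1$ gives $\tilde\alpha - \tilde\alpha = 0$, for which $\wp(0) = 1 > 0$; for any nontrivial $\sigma$ I would argue that the subtraction of $\tilde\alpha$ drives some partial sum negative, ruling it out. For $\mu \neq \tilde\alpha$ and $\mu \neq 0$: I would show no $\sigma$ can make all partial sums lie in $\mathbb{N}$, so the set is empty.

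For the $\mu \neq \tilde\alpha$ case the cleanest route is to leverage what is already proved. A dominant integral $\mu = \sum a_i \varpi_i$ has all $a_i \ge 0$; the constraints $(\varpi_i, \sigma(\tilde\alpha+\rho)-\rho) - (\varpi_i,\mu) \in \mathbb{N}$ say $(\varpi_i, \sigma(\tilde\alpha+\rho)-\rho) \ge (\varpi_i,\mu) = a_1 + \cdots + a_i$. When $\sigma \notin \mathcal{A}(\tilde\alpha,0)$, the proof of Theorem~\ref{t1} shows some partial sum of $\sigma(\tilde\alpha+\rho)-\rho$ is already negative, so subtracting the nonnegative quantity $(\varpi_i,\mu)$ keeps it negative — hence $\sigma \notin \mathcal{A}(\tilde\alpha,\mu)$ either. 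This reduces the whole problem to $\sigma \in \mathcal{A}(\tilde\alpha,0)$, where by Theorem~\ref{t1} every partial sum $(\varpi_i,\sigma(\tilde\alpha+\rho)-\rho)$ equals $0$ or $1$. So the requirement becomes $0$ or $1 \ge a_1 + \cdots + a_i$ together with integrality of the difference, which tightly constrains $\mu$: the partial sums of $\mu$ can never exceed $1$, and I would show the only nonzero dominant $\mu$ compatible with some $\sigma$ is $\mu = \tilde\alpha = \varpi_1 + \varpi_{r}$ (equivalently $a_1 = a_r = 1$ and all other $a_i = 0$), forced with $\sigma = 1$.

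The main obstacle I anticipate is the bookkeeping in the reduction step: making rigorous the claim that a partial sum which is negative for $\sigma(\tilde\alpha+\rho)-\rho$ stays negative after subtracting $(\varpi_i,\mu)$, and simultaneously handling the integrality (not just sign) conditions across all indices $i$ at once, since a single $\sigma$ must satisfy all $r$ constraints simultaneously. The delicate point is ruling out the possibility that a $\sigma \in \mathcal{A}(\tilde\alpha,0)$ other than the identity could pair with some $\mu \neq 0$; here I would use the detailed $0/1$ pattern of partial sums from Proposition~\ref{p1} to pin down $\mu$ completely and confirm that only $(\mu,\sigma) = (\tilde\alpha, 1)$ survives.
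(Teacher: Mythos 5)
Your proposal is correct in substance but takes a genuinely different route from the paper for the ``$\mu\neq\tilde\alpha$'' half. The paper's Proposition \ref{t3} is self-contained: it writes $\mu=(\mu_1,\ldots,\mu_n)$ with $\mu_1\ge\cdots\ge\mu_n$ and $\sum\mu_i=0$, and extracts $\mu_1=1$, $\sigma^{-1}(1)=1$ from the condition on $(\varpi_1,\cdot)$, then $\mu_n=-1$, $\sigma^{-1}(n)=n$ from $(\varpi_{n-1},\cdot)$, and finally kills the middle entries, never invoking Theorem \ref{t1}. You instead first prove the containment $\mathcal A(\tilde\alpha,\mu)\subseteq\mathcal A(\tilde\alpha,0)$ for all dominant $\mu$ --- valid, since for $\sigma\notin\mathcal A(\tilde\alpha,0)$ some integer partial sum $(\varpi_i,\sigma(\tilde\alpha+\rho)-\rho)$ is negative and $(\varpi_i,\mu)\ge 0$ --- and then exploit the $0/1$ pattern of partial sums from Proposition \ref{p1} to force the partial sums of $\mu$ to lie in $\{0,1\}$, which pins down $\mu=\tilde\alpha$ and, since $(\varpi_i,\tilde\alpha)=1$ for all $i\le r$ while $(\varpi_i,\sigma(\tilde\alpha+\rho)-\rho)=0$ whenever $\sigma(i)=i+1$, forces $\sigma=1$ as well. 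This buys a more unified argument (both halves of the theorem, including the analogue of Proposition \ref{p5.1}, fall out of one reduction) at the cost of depending on the full strength of Theorem \ref{t1} and Proposition \ref{p1}, whereas the paper's version is independent of Section 2. One slip to correct: $(\varpi_i,\mu)$ is \emph{not} $a_1+\cdots+a_i$ when $\mu=\sum a_j\varpi_j$; taken literally, the bound $a_1+\cdots+a_r\le 1$ would wrongly exclude $\tilde\alpha=\varpi_1+\varpi_r$ itself. You must work throughout with the $\varepsilon$-coordinates, $(\varpi_i,\mu)=\mu_1+\cdots+\mu_i$, as you do in your first paragraph; with that convention the partial sums of $\tilde\alpha$ are $1,1,\ldots,1$ and the argument closes correctly.
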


We begin by proving the following propositions.

\begin{prop}\label{p5.1} If $\tilde\alpha$ is the highest root of $\mathfrak {sl}_n$, then $\mathcal A(\tilde\alpha,\tilde\alpha)=\{1\}$.
\end{prop}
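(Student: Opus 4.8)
The plan is to show that the only Weyl group element $\sigma$ for which $\wp(\sigma(\tilde\alpha+\rho)-(\tilde\alpha+\rho))>0$ is the identity. Setting $\mu=\tilde\alpha$, the argument of the partition function becomes $\sigma(\tilde\alpha+\rho)-(\tilde\alpha+\rho)$, so $\sigma=1$ always contributes since $\wp(0)=1>0$; the content of the proposition is that \emph{no other} $\sigma$ contributes. By Proposition \ref{prop1}, $\sigma\in\mathcal A(\tilde\alpha,\tilde\alpha)$ if and only if $(\varpi_i,\sigma(\tilde\alpha+\rho)-(\tilde\alpha+\rho))\in\mathbb N$ for all $1\le i\le r$, so I would translate everything into these inner-product inequalities.

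First I would record the relevant vectors explicitly in the coordinate notation of Section 2. We have $\tilde\alpha+\rho=(a_1,\ldots,a_n)$ with $a_j$ as in (\ref{relabel}), and by the same reasoning as in (\ref{1.3}), $\sigma(\tilde\alpha+\rho)$ has $j$-th coordinate $a_{\sigma^{-1}(j)}$. Then the $j$-th coordinate of $\sigma(\tilde\alpha+\rho)-(\tilde\alpha+\rho)$ is $a_{\sigma^{-1}(j)}-a_j$, and using $(\varpi_i,\xi)=\xi_1+\cdots+\xi_i$ I get
\begin{align*}
(\varpi_i,\sigma(\tilde\alpha+\rho)-(\tilde\alpha+\rho))=\sum_{j=1}^{i}\big(a_{\sigma^{-1}(j)}-a_j\big)=\sum_{j=1}^{i}a_{\sigma^{-1}(j)}-\sum_{j=1}^{i}a_j.
\end{align*}
So the membership condition is that the partial sums of the $\sigma^{-1}$-permuted sequence $(a_{\sigma^{-1}(1)},\ldots,a_{\sigma^{-1}(n)})$ dominate the partial sums of the original sequence $(a_1,\ldots,a_n)$, for every initial segment of length $1\le i\le r$ (and the two full sums agree at $i=n$ since permutation preserves the total). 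The key structural fact I would exploit is that the sequence $(a_1,\ldots,a_n)=(n,\,n-2,\,n-3,\ldots,\,2,\,1,\,-1)$ is \emph{strictly decreasing}.

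The main step is then a standard rearrangement/majorization argument: among all permutations of a fixed multiset of reals, the sequence whose every partial sum is maximal is precisely the one sorted in decreasing order, and it is the unique maximizer when the values are distinct. Since $(a_1,\ldots,a_n)$ is already strictly decreasing, its own partial sums are the strict maxima, so the inequalities $\sum_{j=1}^{i}a_{\sigma^{-1}(j)}\ge\sum_{j=1}^{i}a_j$ required for all $i$ force $a_{\sigma^{-1}(j)}=a_j$ for every $j$, hence $\sigma^{-1}=1$ and $\sigma=1$. I expect the main obstacle to be making this uniqueness clean: rather than invoke majorization wholesale, I would give a direct induction on $i$, showing that if the first $i-1$ partial sums already equal the maximal values then the $i$-th inequality forces $a_{\sigma^{-1}(i)}=a_i$ (because any other available value is strictly smaller, and a strictly smaller value would make the $i$-th partial sum strictly less than $\sum_{j=1}^i a_j$, violating nonnegativity of the inner product). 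This pins down $\sigma^{-1}(i)=i$ one index at a time and yields $\sigma=1$, completing the proof that $\mathcal A(\tilde\alpha,\tilde\alpha)=\{1\}$.
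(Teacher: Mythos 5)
Your proposal is correct and follows essentially the same route as the paper: both reduce membership in $\mathcal A(\tilde\alpha,\tilde\alpha)$ to the nonnegativity of the partial sums $\sum_{j=1}^{i}\bigl(a_{\sigma^{-1}(j)}-a_j\bigr)$ and then use the fact that $(a_1,\ldots,a_n)$ is strictly decreasing to force $\sigma^{-1}(i)=i$ index by index (the paper phrases this as taking the minimal moved index rather than an induction, but the argument is the same).
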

\begin{proof} It suffices to show that $\wp(\sigma(\tilde\alpha+\rho)-\rho-\tilde\alpha)>0$ if and only if $\sigma=1$.

$(\Rightarrow)$ Assume that $\sigma\in S_n$ such that $\wp(\sigma(\tilde\alpha+\rho)-\rho-\tilde\alpha)>0$. Proposition \ref{prop1} implies that $(\varpi_i,\sigma(\tilde\alpha+\rho)-\rho-\tilde\alpha)\in\mathbb N$, for all $1\le i\le r$. By (\ref{1.3}) we have that \begin{center}$\sigma(\tilde\alpha+\rho)-\rho-\tilde\alpha= (a_{\sigma^{-1}(1)}-n,\;a_{\sigma^{-1}(2)}-n+2,\ldots,\;a_{\sigma^{-1}(n-1)}-1,\;a_{\sigma^{-1}(n)}+1)$,\end{center} where $a_i$ is given by (\ref{relabel}).

Let $M=\{i\;|\:\sigma^{-1}(i)\neq i\}$. Suppose $M\neq \emptyset$, and let $j=min(M)$. Hence $\sigma^{-1}(j)=k$, for some integer $j<k\le n$. Since $\sigma^{-1}(i)=i$, for all $1\le i\le j-1$, and by definition of $a_i$, we have that $(\varpi_i,\sigma(\tilde\alpha+\rho)-\rho-\tilde\alpha)=0$, for all $1\le i\le j-1$.

Thus, \begin{center}$(\varpi_j,\sigma(\tilde\alpha+\rho)-\rho-\tilde\alpha)=a_{\sigma^{-1}(j)}-n+j=\begin{cases}j-n-1&\text{if $k=n$}\\ j-k&\text{if $j<k<n$.}\end{cases}$\end{center} In either case $(\varpi_j,\sigma(\tilde\alpha+\rho)-\rho-\tilde\alpha)<0$, a contradiction. Thus $M=\emptyset$, and $\sigma^{-1}(i)=i$, for all $1\le i\le n$. Therefore $\sigma=1$.

$(\Leftarrow)$ If $\sigma=1$, then $\wp(\sigma(\tilde\alpha+\rho)-\rho-\tilde\alpha)=\wp(0)=1>0$.
\end{proof}

\begin{prop}\label{t3} Let $\mu \in P_+(\mathfrak {sl}_n)$, and $\mu\neq 0$. Then there exists $\sigma\in S_n$ such that \\$\wp(\sigma(\tilde\alpha+\rho)-\rho-\mu)>0$ if and only if $\mu=\tilde\alpha$.
\end{prop}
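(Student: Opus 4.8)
The plan is to handle the two implications separately, with the reverse one immediate and the forward one carrying all the content. If $\mu=\tilde\alpha$, then $\sigma=1$ gives $\wp(\sigma(\tilde\alpha+\rho)-\rho-\mu)=\wp(0)=1>0$, which is exactly Proposition~\ref{p5.1}. For the forward direction, suppose $\sigma\in S_n$ satisfies $\wp(\sigma(\tilde\alpha+\rho)-\rho-\mu)>0$. By Proposition~\ref{prop1}, $(\varpi_i,\sigma(\tilde\alpha+\rho)-\rho-\mu)\in\mathbb N$, and in particular is $\ge 0$, for every $1\le i\le r$. Writing $\mu$ in its zero-sum coordinate tuple $(\mu_1,\dots,\mu_n)$, dominance means $\mu_1\ge\mu_2\ge\cdots\ge\mu_n$, and I set $m_i=(\varpi_i,\mu)=\mu_1+\cdots+\mu_i$, which is $\ge 0$ since it is a partial sum of a non-increasing sequence with $\sum_j\mu_j=0$.

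The key step I would isolate is the uniform bound
\[(\varpi_i,\sigma(\tilde\alpha+\rho)-\rho)\le 1\qquad\text{for all }\sigma\in S_n\text{ and all }1\le i\le r.\]
By (\ref{1.3}) and (\ref{relabel}) the left side equals $\sum_{j=1}^{i}a_{\sigma^{-1}(j)}-\sum_{j=1}^{i}(n-j)$, and as $\sigma$ ranges over $S_n$ the set $\{\sigma^{-1}(1),\dots,\sigma^{-1}(i)\}$ ranges over all $i$-element subsets of $\{1,\dots,n\}$. Hence $\sum_{j=1}^{i}a_{\sigma^{-1}(j)}$ is at most the sum $n+(n-2)+(n-3)+\cdots+(n-i)$ of the $i$ largest entries of the multiset $\{n,n-2,n-3,\dots,1,-1\}$, which exceeds $\sum_{j=1}^{i}(n-j)=(n-1)+(n-2)+\cdots+(n-i)$ by exactly $1$. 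This sharp arithmetic, that the top $i$ values of $\tilde\alpha+\rho$ beat the top $i$ values of $\rho$ by exactly $1$ uniformly in $i$, is the crux of the statement and the main obstacle; everything after it is bookkeeping.

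Combining the bound with nonnegativity gives $0\le(\varpi_i,\sigma(\tilde\alpha+\rho)-\rho)-m_i\le 1-m_i$, so $m_i\le 1$ for every $1\le i\le r$. Since $\wp(\sigma(\tilde\alpha+\rho)-\rho-\mu)>0$ forces $\mu$ into the root lattice, its zero-sum coordinates are integers, so each $m_i\in\mathbb Z$ and therefore $m_i\in\{0,1\}$; moreover $m_0=0$ and $\sum_j\mu_j=0$ give $m_n=0$. I would then read $\mu$ off from the $m_i$: the differences $\mu_i=m_i-m_{i-1}$ lie in $\{-1,0,1\}$ and, being non-increasing, make $\mu$ a block of $+1$'s followed by $0$'s followed by $-1$'s. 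Since every partial sum is at most $1$, the sequence cannot begin with two $+1$'s (that would force $m_2=2$) nor end with two $-1$'s (that would force $m_{n-2}=2$), so there is at most one $+1$ and at most one $-1$; as $\sum_j\mu_j=0$ equalizes their counts, $\mu\in\{0,\tilde\alpha\}$, and since $\mu\ne 0$ we conclude $\mu=\tilde\alpha$.
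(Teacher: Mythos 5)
Your proof is correct, but it takes a genuinely different route from the paper's. The paper works coordinate by coordinate and intertwines constraints on $\sigma$ with constraints on $\mu$: it uses $(\varpi_1,\cdot)$ to force $\sigma^{-1}(1)=1$ and $\mu_1=1$, then $(\varpi_{n-1},\cdot)$ to force $\sigma^{-1}(n)=n$ and $\mu_n=-1$, then $(\varpi_2,\cdot)$ to rule out $\mu_2=1$, and finally monotonicity to kill the remaining entries. You instead decouple $\sigma$ from $\mu$ entirely via the uniform bound $(\varpi_i,\sigma(\tilde\alpha+\rho)-\rho)\le 1$, obtained by comparing the $i$ largest entries of the multiset $\{n,\,n-2,\,n-3,\dots,1,\,-1\}$ with $(n-1)+(n-2)+\cdots+(n-i)$; this is the type-$A$ instance of the general fact that $\sigma(\lambda+\rho)-\rho$ lies below $\lambda$ in the dominance order, applied with $(\varpi_i,\tilde\alpha)=1$ for all $i$. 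After that the problem collapses to the purely combinatorial observation that a non-increasing integer tuple with zero sum and all partial sums in $\{0,1\}$ is either $0$ or $(1,0,\dots,0,-1)$. What your route buys: a shorter case analysis, no need to extract any information about $\sigma$ itself, and a cleaner treatment of one point the paper glosses over --- you derive integrality of the $\mu_i$ from the fact that $\wp>0$ forces $\mu$ into the root lattice, whereas the paper simply asserts $\mu_i\in\mathbb Z$ for $\mu\in P_+$, which in the $\varepsilon$-coordinate normalization really requires that same root-lattice observation. In a final write-up you should display the ``$i$ largest entries'' computation explicitly, including the endpoint case $i=r$ where the omitted entry is $-1$, since the whole argument rests on the excess being exactly $n-(n-1)=1$ uniformly in $i$.
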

\begin{proof}
$(\Rightarrow)$ If $\mu\in P_+(\mathfrak {sl}_n)$, then $\mu=(\mu_1,\mu_2,\ldots,\mu_n)$, for some $\mu_1,\ldots,\mu_n\in\mathbb Z$, satisfying $\mu_1\ge\mu_2\ge\cdots\ge\mu_n$ and $\sum_{i=1}^{n}\mu_i=0$. Assume $\mu\neq 0$, hence $(\mu_1,\ldots,\mu_n)\neq(0,\ldots,0)$. If $\mu_1<0$, then $\mu_i<0$, for all $2\le i\le n$ and $\sum_{i=1}^n \mu_i\neq 0$, a contradiction. Thus we may assume $\mu_1\ge 0$.

Now suppose there exists $\sigma\in S_n$ such that $\wp(\sigma(\tilde\alpha+\rho)-\rho-\mu)>0$. Proposition \ref{prop1} implies $(\varpi_i,\sigma(\tilde\alpha+\rho)-\rho-\mu)\in\mathbb N$, for all $1\le i\le n-1$. In particular,\\ $(\varpi_1,\sigma(\tilde\alpha+\rho)-\rho-\mu)=a_{\sigma^{-1}(1)}-n+1-\mu_1\ge 0$ if and only if $\mu_1\le a_{\sigma^{-1}(1)}-n+1$.

Observe that
\begin{center}$a_{\sigma^{-1}(1)}-n+1=\begin{cases}1&\text{if $\sigma^{-1}(1)=1$}\\ -n&\text{if $\sigma^{-1}(1)=n$}\\1-\sigma^{-1}(1)&\text{if $1<\sigma^{-1}(1)<n$.}\end{cases}$\end{center}
Then $(\varpi_1,\sigma(\tilde\alpha+\rho)-\rho-\mu)>0$ if and only if $\sigma^{-1}(1)=1$ and $\mu_1\le 1$. Since $\mu_1\ge 0$, we have that $\mu_1\in\{0,1\}$. If $\mu_1=0$, then $\sum_{i=1}^{n}\mu_i=0$ if and only if $\mu=0$, a contradiction. Therefore $\mu_1=1$.

Now observe that $(\varpi_{n-1},\sigma(\tilde\alpha+\rho)-\rho-\mu)=\sum_{i=1}^{n-1}(a_{\sigma^{-1}(i)}-n+i-\mu_i)
            =\mu_n-a_{\sigma^{-1}(n)}.$ Hence $(\varpi_{n-1},\sigma(\tilde\alpha+\rho)-\rho-\mu)\ge 0$ if and only if $\mu_n\ge a_{\sigma^{-1}(n)}$. If $1<\sigma^{-1}(n)<n$, then $a_{\sigma^{-1}(n)}=n-\sigma^{-1}(n)\ge 1$, and hence $\mu_n\ge 1$. Then $1=\mu_1\ge\mu_2\ge\cdots\ge\mu_n\ge 1$ implies that $\mu_i=1$, for all $1\le i\le n$ and so $\sum_{i=1}^{n}\mu_i\neq 0$, a contradiction. Therefore $\sigma^{-1}(n)=n$, and $\mu_n=-1$.

Observe that since $\mu_1=1$, $\mu_n=-1$, and $\mu_1\ge\mu_2\ge\cdots\ge\mu_n$, we have that $\mu_i\in\{1,0,-1\}$, for all $2\le i\le n-1$. If $\mu_2=-1$, then $\mu=(1,-1,\ldots,-1)$ and $\sum_{i=1}^n\mu_i\neq 0$, a contradiction. Suppose $\mu_2=1$. Since $\sigma^{-1}(1)=1$ and $\sigma^{-1}(n)=n$, we have that $1<\sigma^{-1}(2)<n$. Hence $(\varpi_2,\sigma(\tilde\alpha+\rho)-\rho-\mu)=a_{\sigma^{-1}(2)}-n+1=1-\sigma^{-1}(2)<0$, a contradiction. Thus $\mu_2=0$.

Notice that if $\mu_j=-1$, for some $2< j\le n-1$, then $\mu_i=-1$, for all $j<i\le n-1$. In which case $\sum_{i=1}^{n}\mu_i\neq 0$, giving rise to a contradiction. So $\mu_i=0$, for all $2\le i\le n-1$, and thus $\mu=(1,0,\ldots,0,-1)=\tilde\alpha$.

$(\Leftarrow)$ Follows from Proposition \ref{p5.1}.
\end{proof}

\begin{proof}[Proof of Theorem \ref{nonzero}] Follows from Proposition \ref{p5.1} and the contrapositive of Proposition \ref{t3}.
\end{proof}

The following corollary is fundamental in Lie theory. We give an alternate proof of this well known result by using Weyl alternation sets.

\begin{cor} If $\mu\in P(\mathfrak{sl}_{n})$, then $m(\tilde\alpha,\mu)=\begin{cases}r& \text{if $\mu=0$}\\ 1&\text{if $\mu\in\Phi$}\\0&\text{otherwise.}\end{cases}$
\end{cor}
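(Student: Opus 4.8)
The plan is to reduce every weight to its dominant Weyl conjugate and then invoke the results already in hand. The key input from general highest weight theory (see \cite{GW}) is that weight multiplicities are constant on Weyl group orbits: $m(\tilde\alpha,\mu)=m(\tilde\alpha,w\mu)$ for every $w\in W$. Since each $\mu\in P(\mathfrak{sl}_n)$ is $W$-conjugate to a unique dominant weight $\mu^+\in P_+(\mathfrak{sl}_n)$, it suffices to compute $m(\tilde\alpha,\mu^+)$ and to identify which orbits contain $0$, which contain a root, and which contain neither. The case $\mu=0$ is immediate: here $\mu^+=0$ and Corollary \ref{c1} gives $m(\tilde\alpha,0)=r$.

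Next I would treat $\mu\in\Phi$. In type $A_r$ all roots have equal length and $W\cong S_n$ acts transitively on $\Phi$ by permuting the coordinates $\varepsilon_i$; since $\tilde\alpha=(1,0,\ldots,0,-1)$ is visibly the unique dominant root, every $\mu\in\Phi$ satisfies $\mu^+=\tilde\alpha$. By Proposition \ref{p5.1} we have $\mathcal A(\tilde\alpha,\tilde\alpha)=\{1\}$, so all but one term of Kostant's formula (\ref{mult formula}) vanish and
\[
m(\tilde\alpha,\tilde\alpha)=\epsilon(1)\,\wp\big((\tilde\alpha+\rho)-(\tilde\alpha+\rho)\big)=\wp(0)=1.
\]
Therefore $m(\tilde\alpha,\mu)=1$ for every root $\mu$.

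Finally, suppose $\mu\neq 0$ and $\mu\notin\Phi$. The orbit of $\mu$ contains neither $0$ (whose orbit is $\{0\}$) nor any root (whose orbit is all of $\Phi$), so $\mu^+$ is dominant, nonzero, and different from $\tilde\alpha$. Theorem \ref{nonzero} then gives $\mathcal A(\tilde\alpha,\mu^+)=\emptyset$, so every summand of (\ref{mult formula}) is zero and $m(\tilde\alpha,\mu)=m(\tilde\alpha,\mu^+)=0$. The only ingredient outside the Weyl alternation set machinery is the $W$-invariance of weight multiplicities together with the transitivity of $W$ on $\Phi$; I expect the one genuine subtlety to be confirming that $\Phi$ forms a single Weyl orbit whose dominant representative is exactly $\tilde\alpha$, which is where standard structure theory rather than the combinatorics of $\mathcal A$ is required.
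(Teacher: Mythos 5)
Your proposal is correct and follows essentially the same route as the paper: reduce to a dominant representative via $W$-invariance of multiplicities, then apply Corollary \ref{c1} for $\mu=0$ and Theorem \ref{nonzero} (equivalently Proposition \ref{p5.1}) for the remaining cases. Your explicit remark that every root is $W$-conjugate to $\tilde\alpha$ in type $A_r$ is a detail the paper leaves implicit, but it is not a different argument.
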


\begin{proof}  By Proposition 3.1.20 in~\cite{GW}, if $\mu\in P(\mathfrak {sl}_n)$, then there exists $w\in W$ and $\xi\in P_+(\mathfrak {sl}_n)$ such that $w(\xi)=\mu$. Also by Proposition 3.2.27 in~\cite{GW} we know that weight multiplicities are invariant under $W.$ Thus it suffices to compute $m(\tilde\alpha,\mu)$ for $\mu\in P_{+}(\mathfrak{sl}_{n})$. By Corollary \ref{c1}, $m(\tilde\alpha,0)=r$. By Theorem \ref{nonzero} we know $\mathcal A(\tilde\alpha,\tilde\alpha)=\{1\}$, and $\mathcal A(\tilde\alpha,\mu)=\emptyset$ whenever $\mu\in P_+(\mathfrak {sl}_{n+1})-\{0,\tilde\alpha\}$. This implies that $m(\tilde\alpha,\tilde\alpha)=\wp(1(\tilde\alpha+\rho)-\rho-\tilde\alpha)=\wp(0)=1$, and that $m(\tilde\alpha,\mu)=0$ whenever $\mu\in P_+(\mathfrak {sl}_{n+1})-\{0,\tilde\alpha\}$.
\end{proof}

\begin{bibdiv}
\begin{biblist}
\bib{GW}{book}{
    author={Goodman, R.},
    author={Wallach, N.R.},
     title={Symmetry, Representations and Invariants},
publisher={Springer},
   address={New York},
      date={2009},
      ISBN={978-0-387-79851-6},
    review={\MR{2011a:20119}},
}

\bib{H}{book}{
    author={Humphreys, J.E.},
    title={Reflection Groups and Coxeter Groups},
publisher={Cambridge University Press},
   address={Cambridge},
      date={1990},
      ISBN={0-521-43613-3},
    review={\MR{92h:20002}},
}

\bib{KMF}{article}{
    author={Kostant, Bertram},
     title={A formula for the multiplicity of a weight},
      date={1958},
   journal={Proc. Nat. Acad. Sci. U.S.A.},
    volume={44},
     pages={588\ndash 589},
    review={\MR{20 \#5827}},
}
\bib{Kostant}{article}{
    author={Kostant, Bertram},
     title={The principal three-dimensional subgroup and the Betti numbers of a
  complex simple Lie group},
      date={1959},
   journal={Amer. J. Math.},
    volume={81},
     pages={973\ndash 1032},
    review={\MR{22 \#5693}},
}

\bib{LL}{article}{
    author={Lusztig, George},
     title={Singularities, character formulas, and a $q$-analog of weight multiplicities},
      date={1983},
   journal={Ast$\acute{\text{e}}$risque},
    volume={101-102},
     pages={208\ndash 229},
    review={\MR{85m:17005}},
}

\bib{Fibonacci}{book}{
    author={Sigler, L.E.},
     title={Fibonacci's Liber abaci},
publisher={Springer-Verlag},
   address={New York},
      date={2002},
      ISBN={0-387-95419-8},
    review={\MR{2003f:01011}},
}

\bib{Wilf}{book}{
    author={Wilf, Herbert S.},
    title={generatingfunctionology},
publisher={Academic Press Inc.},
   address={Boston, MA},
      date={1994},
      ISBN={0-12-751956-4},
    review={\MR{92a:05002}},
}

\end{biblist}
\end{bibdiv}
\end{document}